\newcommand{\de}[0]{\mathrel{\mathop:}=}
\newcommand{\ie}[0]{\mathrm{i}}
\newcommand{\dif}[1]{\mathrm{d}#1}
\newcommand{\C}[0]{\mathbb{C}}
\newcommand{\R}[0]{\mathbb{R}}
\newcommand{\N}[0]{\mathbb{N}}
\newcommand{\Q}[0]{\mathbb{Q}}
\newcommand{\K}[0]{\mathbb{K}}
\newcommand{\sdeg}[0]{\dif{}_{\mathcal{L}}}
\newtheorem{theorem}{Theorem}
\newtheorem{proposition}{Proposition}
\newtheorem{corollary}{Corollary}
\newtheorem{lemma}{Lemma}
\theoremstyle{definition}
\newtheorem{remark}{Remark}
\newtheorem{example}{Example}
\title[Estimates for $L$-functions in the critical strip under GRH]{Estimates for $L$-functions in the critical strip under GRH with effective applications}
\author{Aleksander Simoni\v{c}}
\address{School of Science, The University of New South Wales (Canberra), ACT, Australia}
\email{a.simonic@student.adfa.edu.au}
\subjclass[2010]{11M06, 11M26; 11N37}
\keywords{Selberg class, Riemann Hypothesis, Mertens function, Explicit results}
\date{\today}
\begin{document}

\begin{abstract}
Assuming the Generalized Riemann Hypothesis, we provide explicit upper bounds for moduli of $\log{\mathcal{L}(s)}$ and $\mathcal{L}'(s)/\mathcal{L}(s)$ in the neighbourhood of the 1-line when $\mathcal{L}(s)$ are the Riemann, Dirichlet and Dedekind zeta-functions. To do this, we generalize Littlewood's well known conditional result to functions in the Selberg class with a polynomial Euler product, for which we also establish a suitable convexity estimate. As an application we provide conditional and effective estimate for the Mertens function.
\end{abstract}

\maketitle
\thispagestyle{empty}

\section{Introduction}

Let $s=\sigma+\ie t$, where $\sigma$ and $t$ are real numbers. Determination of the true order of $\left|\zeta(s)\right|$ in the critical strip, or any other respectable $L$-function, is one of the great problems in zeta-function theory with far-reaching consequences in analytic number theory. It is believed that $\zeta\left(\sigma+\ie t\right)\ll_{\varepsilon,\sigma} |t|^{\varepsilon}$ for every $\varepsilon>0$ and $\sigma\geq1/2$, which in the case $\sigma=1/2$ is known as the Lindel\"{o}f Hypothesis, but any unconditional approach to such bounds seems to be a very hard problem, e.g., see~\cite{BourgainDecoupling} for the latest result when $\sigma=1/2$. For some explicit results in this direction see~\cite{Ford,TrudgianANewUpper,TrudgianExplLogDer,Platt,HiaryLFunc,HiaryAnExplicit,Patel}.

Assuming the Riemann Hypothesis (RH), Littlewood proved in 1912 that
\begin{equation}
\label{eq:Littlewood}
\log{\zeta(s)} \ll_{\varepsilon,\sigma_0} \left(\log{t}\right)^{2(1-\sigma)+\varepsilon}
\end{equation}
for $\varepsilon>0$, $1/2<\sigma_0\leq\sigma\leq 1$ and $t$ large. Techniques in the proof (\cite[Theorem 14.2]{Titchmarsh},~\cite[Theorem 1.12]{Ivic}) are purely complex analytic: the Hadamard--Borel--Carath\'{e}odory (HBC) inequality is used to estimate $\left|\log{\zeta(s)}\right|$ with $\log{\left|\zeta(s)\right|}$ on the particular circles right to the critical line by using convexity estimates to bound the latter expression, while Hadamard's three-circles theorem then guarantees the ``correct'' exponent. Given the ensemble of classical ideas, inequality~\eqref{eq:Littlewood} can be generalized without much difficulty to a broader family of functions, e.g., to the Selberg class of functions with a polynomial Euler product, see Section~\ref{sec:Selberg} for definitions and properties. A similar approach was taken also in~\cite{ConreyGhosh} where authors generalized the Lindel\"{o}f Hypothesis to some functions in the Selberg class. Our main result is the following theorem.

\begin{theorem}
\label{thm:generalSelberg}
Let $\mathcal{L}(s)$ be an element in the Selberg class of functions with a polynomial Euler product of order $m$. Then there exist $C>0$, $\ell>0$, $c\geq 1$ and $T\geq e$ such that
\begin{equation}
\label{eq:logL}
\log{\left|\mathcal{L}(s)\right|} \leq \frac{1}{4}\dif{}_\mathcal{L}\log{(c|t|)}+\ell\log{\log{(c|t|)}} + \log^{+}{C}
\end{equation}
for $|t|\geq T$, $\sigma\geq1/2$ and $\mathcal{L}(s)\neq 0$, where $\dif{}_\mathcal{L}$ is the degree of $\mathcal{L}(s)$ and $\log^{+}{u}\de\max\left\{0,\log{u}\right\}$ for $u>0$. For $C_3>0$ and $|t|\geq e$ define
\begin{equation}
\label{eq:RSet}
\mathscr{R}\left(C_3,c,t\right)\de \left\{w\in\C \colon \Re\{w\}>\frac{1}{2}, \left|\Im\{w\}-t\right|\leq C_3\log{\log{\left(c(|t|+1)\right)}}+3\right\},
\end{equation}
where $c$ is from~\eqref{eq:logL}. If there exists $C_3\geq 1$ such that $\mathcal{L}(z)\neq 0$ for $z\in\mathscr{R}\left(C_3,c,t\right)$, then there exist positive and computable constants $a_1,b_1$ and $a_2,b_2$ which are dependent only on $\sdeg$, $m$, $\ell$ and $C$, such that
\begin{flalign}
\left|\log{\mathcal{L}(s)}\right| &\leq a_1\left(b_1\log{(c|t|)}\right)^{2(1-\sigma)}\log{\log{(c|t|)}}, \label{eq:logLGeneral} \\
\left|\frac{\mathcal{L}'}{\mathcal{L}}(s)\right| &\leq a_2\left(b_2\log{(c|t|)}\right)^{2(1-\sigma)}\left(\log{\log{(c|t|)}}\right)^2 \label{eq:logderLGeneral}
\end{flalign}
for $|t|\geq t_0(c,T)>0$ and
\begin{equation}
\label{eq:RegionForSigma}
\sigma\in\mathscr{S}\left(A,B,c,t\right)\de\left[\frac{1}{2}+\frac{A}{\log{\log{(c|t|)}}}, 1+\frac{B}{\log{\log{(c|t|)}}}\right],
\end{equation}
where $A$ and $B$ are some positive constants.
\end{theorem}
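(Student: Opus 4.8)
The plan is to follow the classical Littlewood strategy (as in Titchmarsh's Theorem 14.2) but keep careful track of all constants so they depend only on the data $\sdeg$, $m$, $\ell$, $C$ of the Selberg-class element. The starting point is the convexity bound~\eqref{eq:logL}, which controls $\log\lvert\mathcal{L}(s)\rvert$ from above by $\tfrac14\sdeg\log(c\lvert t\rvert) + \ell\log\log(c\lvert t\rvert) + \log^+ C$ throughout $\sigma\geq 1/2$. Fix $t$ with $\lvert t\rvert\geq T$ and abbreviate $L\de\log(c\lvert t\rvert)$ and $\lambda\de\log\log(c\lvert t\rvert)$. The first step is to apply the Hadamard--Borel--Carath\'eodory inequality to the function $\log\mathcal{L}(w)$, which is holomorphic on a region to the right of the $1/2$-line by the hypothesis $\mathcal{L}(z)\neq 0$ for $z\in\mathscr{R}(C_3,c,t)$: on a disc centred at a point $2+\ie t$ (where $\log\mathcal{L}$ is bounded by an absolute constant coming from the Euler product of order $m$) and of radius comparable to $C_3\lambda$, the real part $\log\lvert\mathcal{L}\rvert$ is bounded by $O(L+\ell\lambda)$, hence HBC gives $\lvert\log\mathcal{L}(w)\rvert \ll L$ on a slightly smaller disc, for $w$ a bounded distance to the right of the $1/2$-line.

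**Next I would** run Hadamard's three-circles theorem to upgrade the crude bound $\lvert\log\mathcal{L}\rvert\ll L$ near $\sigma=1/2$ into the bound with the correct exponent $2(1-\sigma)$. Concretely, for a fixed target $\sigma\in\mathscr{S}(A,B,c,t)$ one compares the maximum of $\lvert\log\mathcal{L}\rvert$ on three concentric circles centred at, say, $\sigma_2+\ie t$ for a suitable fixed $\sigma_2$ close to $1$: on the innermost (radius $\sim\sigma_2-1$) the bound is $O(1)$; on the outermost (radius $\sim\sigma_2-1/2+\text{small}$) the bound is $O(L)$ from the HBC step; the middle radius is chosen to pass through $\sigma$, and the convexity-of-$\log\max$ relation yields an exponent which, after optimising the geometry and letting the inner radius shrink like $1/\lambda$ (this is exactly why $\sigma$ must stay in the window $[\tfrac12 + A/\lambda,\,1+B/\lambda]$ and why a factor $\lambda$ — and $\lambda^2$ for the logarithmic derivative — survives), produces $\lvert\log\mathcal{L}(s)\rvert\ll (b_1 L)^{2(1-\sigma)}\lambda$, which is~\eqref{eq:logLGeneral}. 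The constants $a_1,b_1$ emerging from the three-circles inequality are explicit in terms of the radii and the $O(L)$, $O(1)$ bounds, hence depend only on $\sdeg,m,\ell,C$ (and the fixed choices of $\sigma_2$, etc.), as claimed.

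**Then, to obtain~\eqref{eq:logderLGeneral},** I would apply the Cauchy integral formula to $\log\mathcal{L}$ on a small circle of radius $r\sim 1/\lambda$ around $s$, staying inside the zero-free region $\mathscr{R}(C_3,c,t)$ and inside the slightly enlarged $\sigma$-window where~\eqref{eq:logLGeneral} holds; this gives $\lvert\mathcal{L}'/\mathcal{L}(s)\rvert \leq r^{-1}\max\lvert\log\mathcal{L}\rvert \ll \lambda\cdot (b_2 L)^{2(1-\sigma)}\lambda = (b_2 L)^{2(1-\sigma)}\lambda^2$, accounting for the extra $\lambda$ and the slightly different base constant $b_2>b_1$ (because the circle of radius $r$ reaches points with $1-\sigma$ enlarged by $O(1/\lambda)$, which costs a bounded multiplicative factor in the base of the exponential after using $(b_1 L)^{O(1/\lambda)} = O(1)$). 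The threshold $\lvert t\rvert\geq t_0(c,T)$ is needed to guarantee $\lambda$ is large enough that all these $O(1/\lambda)$ perturbations of the exponent are genuinely absorbed into the constants.

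**The main obstacle** is bookkeeping rather than conceptual: one must verify that every geometric parameter (the centres $\sigma_2+\ie t$, the three radii, the Cauchy radius $r$) can be chosen uniformly in $t$ so that the relevant discs lie inside $\mathscr{R}(C_3,c,t)$ — this is precisely why the zero-free rectangle in~\eqref{eq:RSet} was taken with half-height $\sim C_3\lambda$ — and that the resulting constants $a_i,b_i$ truly collapse to a dependence on only $\sdeg,m,\ell,C$. A secondary subtlety is the behaviour of the bound at $\sigma=1$ and just past it: there $2(1-\sigma)\leq 0$, so $(b_i L)^{2(1-\sigma)}$ is bounded, and one must check that the three-circles estimate still gives something meaningful (it does, degenerating to the expected $\lvert\log\mathcal{L}\rvert\ll\lambda$, $\lvert\mathcal{L}'/\mathcal{L}\rvert\ll\lambda^2$ near the $1$-line), which is the form in which the estimates will be fed into the Mertens-function application.
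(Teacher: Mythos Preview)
Your overall architecture---convexity bound, then HBC, then Hadamard three-circles, then Cauchy for the logarithmic derivative---matches the paper's, and your treatment of the Cauchy step for~\eqref{eq:logderLGeneral} is correct. However, the geometry you describe for the HBC and three-circles steps is essentially reversed, and as written it does not produce the exponent $2(1-\sigma)$.

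Your HBC disc ``centred at $2+\ie t$ and of radius comparable to $C_3\lambda$'' cannot work: such a disc reaches $\Re\{w\} = 2 - C_3\lambda < 1/2$, where $\log\mathcal{L}$ need not be holomorphic. In the paper (Lemma~\ref{lem:logLcircles}) HBC is applied on \emph{small} discs of radius just under $1/2$, centred at $1+C_1/\lambda + \ie t'$ for varying $t'$, yielding $|\log\mathcal{L}(z)| \ll L/\delta$ on the sub-disc reaching down to $\Re\{z\}=1/2+\delta$.

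More seriously, your three-circles step with centre $\sigma_2$ ``close to $1$'' and inner radius $\sigma_2-1\sim 1/\lambda$ gives the wrong interpolation exponent. With $r_1 \sim 1/\lambda$, $r_3 \sim 1/2$, $r_2 = \sigma_2-\sigma$, one gets
\[
\mu=\frac{\log(r_2/r_1)}{\log(r_3/r_1)}=\frac{\log\bigl((1-\sigma)\lambda+O(1)\bigr)}{\log\bigl(\lambda/2+O(1)\bigr)}\longrightarrow 1
\]
for every fixed $\sigma\in(1/2,1)$, not $\mu=2(1-\sigma)$; the resulting bound is no better than the raw HBC output $\ll L$. Linearity $\mu\approx 2(1-\sigma)$ requires all three radii to be close to one another so that $\log(r_j/r_1)\approx (r_j-r_1)/r_1$, which forces the centre (and the radii) to be \emph{large}. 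The paper (Theorem~\ref{thm:mainSel}) takes centre $\sigma_0 = C_3\lambda + 1 + \delta_0$ with $r_1 = C_3\lambda$, $r_3 = C_3\lambda + 1/2$, $r_2=\sigma_0-\sigma$; then $r_3/r_1 = 1 + 1/(2C_3\lambda)$ and a first-order expansion gives $\mu = 2(1-\sigma) + O(1/\lambda)$. The bound $M_3$ on the large outer circle is then obtained by covering its portion near $\Re\{z\}=1/2$ (which sweeps through $|\Im\{z\}-t|\leq C_3\lambda+1/2$) with the small HBC discs from the first step---and \emph{this}, not the size of the HBC discs, is why the zero-free rectangle $\mathscr{R}(C_3,c,t)$ needs half-height $\sim C_3\lambda$.
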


Theorem~\ref{thm:generalSelberg} is contained in Theorem~\ref{thm:mainSel} and Corollary~\ref{cor:mainSel} where $a_1,b_1$ and $a_2,b_2$ are explicitly given as functions in the variables $\sdeg$, $m$, $\ell$ and $C$. This enables us to obtain explicit estimates~\eqref{eq:logLGeneral} and~\eqref{eq:logderLGeneral} if we know~\eqref{eq:logL} effectively. Inequality~\eqref{eq:logderLGeneral} is a consequence of~\eqref{eq:logLGeneral} by Cauchy's integral formula. Note that bounds for $\left|\log{\mathcal{L}(s)}\right|$ and $\left|\mathcal{L}'(s)/\mathcal{L}(s)\right|$ can be easily deduced by elementary methods when $\sigma\geq1+B/\log{\log{(c|t|)}}$, see Remark~\ref{rem:right}.

Observe that the condition $\mathcal{L}(z)\neq 0$ for $z\in\mathscr{R}\left(C_3,c,t\right)$ is always true under the Generalized Riemann Hypothesis (GRH), i.e., $\mathcal{L}(s)\neq 0$ for $\sigma>1/2$. However, our result is valid under the slightly less restrictive condition which can be viewed as a ``local'' GRH. Small intervals in $t$-aspect come from the radii of the largest circles in the proof being $\ll \log{\log{|t|}}$.

Estimate~\eqref{eq:logL} is nothing more than a precise form of a well known convexity result for the Selberg class of functions, see~\cite[Theorem 6.8]{SteudingBook}. Our inequality follows by taking the same approach, but with the crucial assumption that $\mathcal{L}(s)\ll \log^{\ell}{|t|}$ on the 1-line for some $\ell>0$ which may depend on $\mathcal{L}$. Although we are not able to prove this for the full Selberg class, we show that it is true with $\ell=m$ if we have a polynomial Euler product of order $m$, see Theorem~\ref{thm:zerofree}~(c). Similar result exists also when axioms on classical zero-free region and mild growth condition left to the critical strip are assumed instead of having the axiom on a functional equation, i.e., for the class $\mathcal{G}$ from~\cite[Definition 1.2]{DixitKamala}, see Theorem~\ref{thm:zerofree}~(a). We must also emphasize that one could use subconvexity estimates in place of~\eqref{eq:logL}, but with the method presented here this would only result into numerical improvements upon $a_1,b_1$ and $a_2,b_2$.

Better conditional (RH) estimates than~\eqref{eq:logLGeneral} and~\eqref{eq:logderLGeneral} for the region specified by~\eqref{eq:RegionForSigma} exist for the Riemann zeta-function, see~\cite[pp.~383--384]{Titchmarsh}, \cite[Corollaries 13.14 and 13.16]{MontgomeryVaughan} for a different approach, and~\cite[Theorem 1]{CarneiroChandee} for the latest improvements on constants for the leading terms. To some extension better estimates exist even for general $L$-functions (in the framework of Iwaniec and Kowalski~\cite[Chapter 5]{IKANT}) when assuming GRH and the Ramanujan--Petersson conjecture, see~\cite[Theorems 5.17 and 5.19]{IKANT}, and also~\cite[Theorem 1]{ChirreANote} when $L$ is entire and satisfies only GRH. The main objective of this paper is thus not to obtain \emph{some} conditional bounds for fairly large family of $L$-functions, but rather simultaneously provide their explicit counterparts for three important members: the Riemann, Dirichlet, and Dedekind zeta-functions. It might be interesting to generalize the previously mentioned results to the Selberg class, and then explore possibilities to make them effective.

\begin{corollary}[Riemann zeta-function]
\label{cor:zeta}
Let $\mathcal{L}(s)=\zeta(s)$, where $s=\sigma+\ie t$ with $\sigma>1/2$ and $|t|\geq 10^4$. Assume that $\zeta(z)\neq0$ for $z\in\mathscr{R}\left(10^3,1,t\right)$. Then the following is true:
\begin{enumerate}
  \item[(a)] Inequality~\eqref{eq:logLGeneral} is valid for $c=1$, $a_1=5.44$, $0.95<b_1<0.951$ and $\sigma\in\mathscr{S}\left(0.5,0.5,1,t\right)$.
  \item[(b)] Inequality~\eqref{eq:logderLGeneral} is valid for $c=1$, $a_2=33.281$, $0.97<b_2<0.971$ and $\sigma\in\mathscr{S}\left(1.0051,0.3349,1,t\right)$.
\end{enumerate}
The sets $\mathscr{R}$ and $\mathscr{S}$ are defined by~\eqref{eq:RSet} and~\eqref{eq:RegionForSigma}, respectively.
\end{corollary}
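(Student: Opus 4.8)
The plan is to apply Theorem~\ref{thm:generalSelberg}—or rather its effective refinement in Theorem~\ref{thm:mainSel} and Corollary~\ref{cor:mainSel}—to the concrete case $\mathcal{L}(s)=\zeta(s)$, and then to track all constants numerically. The Riemann zeta-function lies in the Selberg class with degree $\sdeg=1$ and a polynomial Euler product of order $m=1$, so by Theorem~\ref{thm:zerofree}~(c) the hypothesis $\zeta(s)\ll\log^{\ell}|t|$ on the $1$-line holds with $\ell=m=1$. First I would pin down an admissible value of the constant $C$ in~\eqref{eq:logL}: using the known explicit convexity bound for $\log|\zeta(s)|$ in the strip $\sigma\geq 1/2$ together with the classical subconvexity/functional-equation input, one extracts a concrete $C>0$ valid for $|t|\geq T$ with $T$ taken small enough that the stated range $|t|\geq 10^4$ is covered (here the choice $c=1$ is available because $\zeta$ has conductor $1$, so no scaling of $t$ is needed). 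With $\sdeg=1$, $m=1$, $\ell=1$ and this value of $C$ fixed, the constants $a_1,b_1,a_2,b_2$ of Theorem~\ref{thm:generalSelberg} become explicit numbers through the formulas of Corollary~\ref{cor:mainSel}.

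The second step is the arithmetic: substitute $\sdeg=m=\ell=1$ and the chosen $C$ into the closed-form expressions for $a_1,b_1$ from Corollary~\ref{cor:mainSel} and simplify, verifying $a_1\leq 5.44$ and $0.95<b_1<0.951$; then feed~\eqref{eq:logLGeneral} into Cauchy's integral formula (as indicated after the statement of Theorem~\ref{thm:generalSelberg}) to obtain~\eqref{eq:logderLGeneral} with $a_2\leq 33.281$ and $0.97<b_2<0.971$. The admissible sets $\mathscr{S}$ for $\sigma$ are read off from the proof of Theorem~\ref{thm:mainSel}: the lower endpoint offset $A$ is governed by how far right of the critical line the three-circles argument can start, and the upper offset $B$ by where the elementary Dirichlet-series bound (Remark~\ref{rem:right}) takes over; for the log-derivative one loses a little on both ends because differentiating via Cauchy shrinks the disc, which accounts for the slightly different numbers $\mathscr{S}(1.0051,0.3349,1,t)$ versus $\mathscr{S}(0.5,0.5,1,t)$. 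The hypothesis $\zeta(z)\neq 0$ on $\mathscr{R}(10^3,1,t)$ is exactly the ``local GRH'' input required by Theorem~\ref{thm:generalSelberg} with the choice $C_3=10^3\geq 1$, so nothing beyond bookkeeping is needed to invoke it.

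The main obstacle I anticipate is not conceptual but numerical: obtaining a value of $C$ in~\eqref{eq:logL} that is simultaneously small enough and valid already from $|t|=10^4$ (rather than only for genuinely large $t$) requires a careful explicit treatment of the convexity estimate for $\zeta$ near the critical line—presumably one uses an explicit form of the functional equation and Phragmén–Lindelöf, or borrows an existing explicit bound from the literature cited in the introduction (e.g.~\cite{Ford,TrudgianANewUpper,HiaryAnExplicit})—and then one must propagate this $C$ through the somewhat intricate constant-chasing in Corollary~\ref{cor:mainSel} without the numbers blowing up past the claimed thresholds. A secondary subtlety is checking that the threshold $t_0(c,T)$ appearing in Theorem~\ref{thm:generalSelberg} is indeed $\leq 10^4$ for the zeta case, which again comes down to choosing $T$ and the various auxiliary parameters (radii of the circles, the constant $C_3$) compatibly. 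Once $C$ and these thresholds are fixed, the remainder is a finite, if tedious, verification that the displayed inequalities hold with the stated constants.
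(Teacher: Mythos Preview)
Your proposal is correct and follows essentially the same route as the paper: apply Theorem~\ref{thm:mainSel} and Corollary~\ref{cor:mainSel} with $\sdeg=m=\ell=1$, $c=1$, $C_3=10^3$, and then optimize the auxiliary parameters. The only point on which you are vaguer than necessary is the source of the convexity constant: the paper does not derive it afresh or appeal to modern subconvexity, but simply quotes Backlund's 1918 bound (Example~\ref{rem:backlund}), which gives $C=1$ and $T=50$ directly; with that input one takes $t_0=T_1=10^4$, $T_2=7778$, and then the numerical optimization yields $C_1=0.25$, $C_2=0.5$ for part~(a) and $C_1=0.34$, $C_2=0.67$, $C_4=0.67/2.0001$ for part~(b), after which the stated values of $a_1,b_1,a_2,b_2$ and the $\mathscr{S}$-ranges drop out of the formulas in~\eqref{eq:Khat}--\eqref{eq:KR}.
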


\begin{corollary}[Dirichlet $L$-functions]
\label{cor:dirichlet}
Let $\mathcal{L}(s)=L\left(s,\chi\right)$, where $\chi$ is a primitive character modulo $q\geq 2$ and $s=\sigma+\ie t$, where $\sigma>1/2$ and
\begin{equation*}
|t|\geq 10450+10^3\log{\log{q}}.
\end{equation*}
Assume that $L\left(z,\chi\right)\neq0$ for $z\in\mathscr{R}\left(10^3,q,t\right)$. Then the following is true:
\begin{enumerate}
  \item[(a)] Inequality~\eqref{eq:logLGeneral} is valid for $c=q$, $a_1=5.44$, $0.95<b_1<0.951$ and $\sigma\in\mathscr{S}\left(0.5,0.5,q,t\right)$.
  \item[(b)] Inequality~\eqref{eq:logderLGeneral} is valid for $c=q$, $a_2=33.281$, $0.97<b_2<0.971$ and $\sigma\in\mathscr{S}\left(1.0051,0.3349,q,t\right)$.
\end{enumerate}
The sets $\mathscr{R}$ and $\mathscr{S}$ are defined by~\eqref{eq:RSet} and~\eqref{eq:RegionForSigma}, respectively.
\end{corollary}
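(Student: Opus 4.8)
The plan is to derive Corollary~\ref{cor:dirichlet} as a direct specialization of Theorem~\ref{thm:generalSelberg} to $\mathcal{L}(s)=L(s,\chi)$ for a primitive character $\chi$ modulo $q\geq 2$. First I would record that $L(s,\chi)$ belongs to the Selberg class with degree $\sdeg=1$ and admits a polynomial Euler product of order $m=1$, so that Theorem~\ref{thm:zerofree}~(c) applies and furnishes the crucial hypothesis~\eqref{eq:logL} with $\ell=m=1$. The point is that for Dirichlet $L$-functions the conductor is $q$, so the natural analytic conductor in the $t$-aspect is $q(|t|+1)$, which is why the parameter $c$ in~\eqref{eq:logL}--\eqref{eq:RegionForSigma} should be taken to be $c=q$; correspondingly $T$ and $t_0$ in Theorem~\ref{thm:generalSelberg} will depend on $q$ through a term of size $\log\log q$, which is the source of the stated lower bound $|t|\geq 10450+10^3\log\log q$.

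Next I would invoke Theorem~\ref{thm:mainSel} and Corollary~\ref{cor:mainSel} — the explicit forms promised in the text after Theorem~\ref{thm:generalSelberg} — which give $a_1,b_1$ and $a_2,b_2$ as concrete functions of $\sdeg$, $m$, $\ell$ and $C$. Since $\sdeg=1$, $m=1$, $\ell=1$ are exactly the same values that occur for $\zeta(s)$ (where one also has $\sdeg=1$, $m=1$, $\ell=1$), and since the admissible constant $C$ from~\eqref{eq:logL} can be taken uniform for the whole family of Dirichlet $L$-functions once the conductor is absorbed into $c=q$, the resulting numerical constants $a_1=5.44$, $b_1\in(0.95,0.951)$, $a_2=33.281$, $b_2\in(0.97,0.971)$ and the shape of the admissible $\sigma$-ranges $\mathscr{S}(0.5,0.5,q,t)$ and $\mathscr{S}(1.0051,0.3349,q,t)$ coincide with those obtained for $\zeta$ in Corollary~\ref{cor:zeta}. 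Thus the bulk of the argument is to check that plugging $\sdeg=m=\ell=1$ and the value of $C$ valid for $L(s,\chi)$ into the explicit expressions from Theorem~\ref{thm:mainSel} and Corollary~\ref{cor:mainSel} produces exactly these constants, and that the hypothesis $L(z,\chi)\neq 0$ on $\mathscr{R}(10^3,q,t)$ matches the zero-freeness requirement of Theorem~\ref{thm:generalSelberg} with $C_3=10^3$.

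The remaining work is purely a matter of making the thresholds explicit. I would: (i) extract from Theorem~\ref{thm:zerofree}~(c) the explicit constant $C$ (and verify it is independent of $q$, or at worst enters only through $c=q$), together with the explicit value of $T$ in~\eqref{eq:logL}; (ii) trace through the proof of Theorem~\ref{thm:mainSel} to determine $t_0(c,T)$ as an explicit function, which for $c=q$ becomes $t_0=10450+10^3\log\log q$ after the three-circles optimization fixes the radius $\asymp\log\log(q(|t|+1))$; and (iii) confirm that with these choices the inequalities~\eqref{eq:logLGeneral} and~\eqref{eq:logderLGeneral} hold on the stated $\sigma$-ranges. The main obstacle I anticipate is bookkeeping rather than conceptual: one must ensure that every constant which implicitly depended on $q$ in the general Selberg-class argument (the error terms in the convexity bound, the radii of the Hadamard circles, the zero-counting estimates near the $1$-line) is genuinely captured by the single substitution $c\mapsto q$, so that the final numerical constants are literally the same as in the $\zeta$ case and only the lower bound on $|t|$ shifts by the additive $10^3\log\log q$. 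Verifying this uniformity in $q$ — essentially that the proof of Theorem~\ref{thm:generalSelberg} is "conductor-uniform" once one works with $c|t|$ in place of $|t|$ — is the step requiring the most care.
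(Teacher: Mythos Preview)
Your approach is essentially the same as the paper's: specialize Theorem~\ref{thm:mainSel} and Corollary~\ref{cor:mainSel} with $\sdeg=m=\ell=C=1$, $c=q$, $C_3=10^3$, and the same optimized values of $C_1,C_2,C_4$ as for $\zeta$, then verify the threshold conditions on $t_0$ hold uniformly in $q\geq 2$.

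One correction worth flagging: the explicit constants $C$ and $T$ in~\eqref{eq:logL} do \emph{not} come from Theorem~\ref{thm:zerofree}~(c), which is a soft statement with no control on the implied constant. The paper instead obtains the explicit convexity input from Rademacher's bound for $L(s,\chi)$ (Example~\ref{rem:dirichlet}), yielding $C=1$, $c=q$, $T=7778$ directly. With these values the paper sets $T_1=10^4$, $T_2=7788$, and the only $q$-dependent verification is that $t_0(q)-10^3\log\log\bigl(q\,t_0(q)\bigr)-3/2\geq T_2$ for all $q\geq 2$, which holds because the left-hand side is increasing in $q$. Your instinct that the ``conductor-uniformity'' is the delicate point is right, and this monotonicity check is precisely how it is discharged.
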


\begin{corollary}[Dedekind zeta-functions]
\label{cor:dedekind}
Let $\mathcal{L}(s)=\zeta_{\K}(s)$, where $\K\neq\Q$ is a number field of degree $n_{\K}$ and discriminant $\Delta_{\K}$, and $s=\sigma+\ie t$, where $\sigma>1/2$ and
\begin{equation*}
|t|\geq 9650+10^3\log{\log{\left(5.552\left|\Delta_{\K}\right|^{1/n_{\K}}\right)}}.
\end{equation*}
Assume that
\[
\zeta_{\K}(z)\neq0 \quad \textrm{for} \quad z\in\mathscr{R}\left(10^3,5.552\left|\Delta_{\K}\right|^{1/n_{\K}},t\right).
\]
Then the following is true:
\begin{enumerate}
  \item[(a)] Inequality~\eqref{eq:logLGeneral} is valid for $c=5.552\left|\Delta_{\K}\right|^{1/n_{\K}}$, $a_1=5.44n_{\K}$,
             \[
                0.949 + \frac{1}{n_{\K}}0.0913 < b_1 < 0.95 + \frac{1}{n_{\K}}0.0914
             \]
             and $\sigma\in\mathscr{S}\left(0.5,0.5,c,t\right)$.
  \item[(b)] Inequality~\eqref{eq:logderLGeneral} is valid for $c=5.552\left|\Delta_{\K}\right|^{1/n_{\K}}$, $a_2=33.711n_{\K}$,
             \[
                0.964 + \frac{1}{n_{\K}}0.0961 < b_2 < 0.965 + \frac{1}{n_{\K}}0.0962
             \]
             and $\sigma\in\mathscr{S}\left(0.961,0.3199,c,t\right)$.
\end{enumerate}
The sets $\mathscr{R}$ and $\mathscr{S}$ are defined by~\eqref{eq:RSet} and~\eqref{eq:RegionForSigma}, respectively.
\end{corollary}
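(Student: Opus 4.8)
The plan is to specialize Theorem~\ref{thm:generalSelberg} (equivalently Theorem~\ref{thm:mainSel} and Corollary~\ref{cor:mainSel}, where the constants $a_1,b_1,a_2,b_2$ are given as explicit functions of $\sdeg$, $m$, $\ell$, $C$) to $\mathcal{L}(s)=\zeta_{\K}(s)$, so the work is the book-keeping of four numerical inputs. First I would record that $\zeta_{\K}$ lies in the Selberg class with $\sdeg=n_{\K}$, and that by Theorem~\ref{thm:zerofree}~(c) it has a polynomial Euler product of order $m=1$ (so one may take $\ell=m=1$ in~\eqref{eq:logL}). Next I would pin down the value $c$ in~\eqref{eq:logL}: the degree-term there carries $\log(c|t|)$, and for the Dedekind zeta-function the natural conductor-type quantity coming from the functional equation and Stirling is $c=5.552\,|\Delta_{\K}|^{1/n_{\K}}$ (the constant $5.552$ absorbing the $\Gamma$-factor contribution $(2\pi)^{-n_{\K}}$, the $r_1,r_2$ split, and the shift needed to push the archimedean analysis past $|t|\ge e$); this is precisely the $c$ that makes the right-hand side of~\eqref{eq:logL} valid with $\ell=1$ and an absolute $C$, and it is the $c$ that then appears in the hypothesis region $\mathscr{R}(10^3,c,t)$ and in the conclusions.

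With $\sdeg=n_{\K}$, $m=\ell=1$, and $C$ the absolute constant from Theorem~\ref{thm:zerofree}~(c), I would then substitute into the closed forms for $a_1,b_1$ from Corollary~\ref{cor:mainSel}. The expected shape is $a_1 = (\text{absolute constant})\cdot n_{\K}$, giving $a_1=5.44\,n_{\K}$ as in part~(a), exactly mirroring the $a_1=5.44$ of Corollary~\ref{cor:zeta} (the $\zeta$ case is $n_{\K}=1$ with the same $C$). For $b_1$ the degree enters only through a term of size $O(1/n_{\K})$ coming from the $\ell\log\log$ and $\log^+C$ pieces relative to the leading $\tfrac14\sdeg\log(c|t|)$, which produces the stated two-sided bound $0.949+0.0913/n_{\K}<b_1<0.95+0.0914/n_{\K}$; here one checks the two numerical endpoints by a direct, monotone estimate of the constant expression in Corollary~\ref{cor:mainSel}, and notes it collapses to $0.95<b_1<0.951$ when $n_{\K}=1$, consistent with Corollary~\ref{cor:zeta}~(a). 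The admissible $\sigma$-window $\mathscr{S}(0.5,0.5,c,t)$ and the constants $A=B=1/2$ are the same universal ones delivered by Theorem~\ref{thm:generalSelberg}, so nothing new is needed there beyond verifying that the hypotheses $C_3=10^3\ge1$ and $|t|\ge 9650+10^3\log\log(5.552|\Delta_{\K}|^{1/n_{\K}})$ imply $|t|\ge t_0(c,T)$ in the theorem; that reduces to a single inequality comparing the explicit $t_0$ (a linear function of $\log\log(c|t|)$ plus a constant) with the stated lower bound for $|t|$, which I would verify by choosing the numerical constant $9650$ large enough.

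Part~(b) follows the same route applied to~\eqref{eq:logderLGeneral} via Cauchy's integral formula over a circle of radius $\asymp 1/\log\log(c|t|)$ inside the non-vanishing region, which is why an extra factor $\log\log(c|t|)$ appears and why the $\sigma$-window tightens to $\mathscr{S}(0.961,0.3199,c,t)$; I would take the constants $a_2,b_2$ from Corollary~\ref{cor:mainSel}, obtaining $a_2=33.711\,n_{\K}$ and $0.964+0.0961/n_{\K}<b_2<0.965+0.0962/n_{\K}$, and check the $n_{\K}=1$ specialization against $a_2=33.281$, $0.97<b_2<0.971$ of Corollary~\ref{cor:zeta}~(b) — note the slight discrepancy ($33.711$ vs.\ $33.281$) is expected because in the $\K=\Q$ corollary one can use a sharper input for $\zeta$ on the $1$-line, whereas for general $\K$ one must use the cruder uniform constant, so the general bound is deliberately not required to be tight at $n_{\K}=1$. \textbf{The main obstacle} is not conceptual but quantitative: tracking the constant $5.552$ and the lower-order contributions to $b_1,b_2$ so that the claimed two-decimal-place two-sided bounds actually hold uniformly in $n_{\K}\ge 2$ and in $|\Delta_{\K}|$; this requires care that the $\Gamma$-factor estimates (Stirling with explicit error terms) feeding into~\eqref{eq:logL} are monotone in the right variables so that it suffices to check the endpoints, and that the definition of $c$ is chosen precisely so that $\log(c|t|)\ge$ (the relevant archimedean scale) for all $|t|$ in the stated range — any sloppiness there propagates into both $a_i$ and $b_i$.
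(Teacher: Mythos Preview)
Your proposal has a concrete error in the identification of the input parameters. You assert that $\zeta_{\K}$ has a polynomial Euler product of order $m=1$ and that one may take $\ell=m=1$ in~\eqref{eq:logL}. This is false: for a number field $\K$ of degree $n_{\K}$, each rational prime $p$ factors into at most $n_{\K}$ prime ideals, so the local Euler factor is a product of up to $n_{\K}$ terms and one must take $m=n_{\K}$ (this is stated in the paper's Section~\ref{sec:Selberg}). Correspondingly, the convexity input is not the absolute-constant one you describe; Example~\ref{rem:dedekind} derives~\eqref{eq:logL} with $\sdeg=\ell=n_{\K}$, $C=1.9$, $c=5.552|\Delta_{\K}|^{1/n_{\K}}$ and $T=7778$ from Rademacher's Phragm\'en--Lindel\"of bound, and the constant $5.552$ arises from that specific optimization, not from the $\Gamma$-factor heuristic you give.

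This misidentification is fatal for the numerics. In the formula $a_1=\frac{m}{C_2}\exp(\ldots)$ from Theorem~\ref{thm:mainSel}, it is precisely $m=n_{\K}$ that produces $a_1=5.44\,n_{\K}$; with your $m=1$ one would get an absolute constant instead. Likewise, $b_1=\frac{1}{m}K(\ldots)\cdot(\text{corrections})$: since $K$ contains terms proportional to $\sdeg=n_{\K}$, to $\ell=n_{\K}$, to $m=n_{\K}$, and a single absolute term $\log^{+}C=\log 1.9$, dividing by $m=n_{\K}$ yields the decomposition $b_1=K_1+\frac{1}{n_{\K}}K_2$ used in the paper, with the $1/n_{\K}$ contribution coming from $\log 1.9$. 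With your choice $m=1$, $b_1$ would instead grow linearly in $n_{\K}$ and the stated two-sided bounds could not hold. The paper then fixes $T_1=10188$, $T_2=7794$, optimizes $C_1,C_2,C_4$ at $n_{\K}=2$ (the smallest admissible degree, since $\K\neq\Q$), and invokes the monotonicity of $b_1,b_2$ in $n_{\K}$ to extend to all $n_{\K}\geq 2$; your consistency check at $n_{\K}=1$ is not the right endpoint.
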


Although applications of conditional and effective estimates for $L$-functions in the critical strip to various number-theoretic problems exist, see~\cite[p.~20]{MCH} for instance, results in this direction are quite obscure. Chandee~\cite{ChandeeExplBounds} obtained fully explicit bounds for $L$-functions on the critical line when analytic conductor is at least of the order $\exp{\left(\exp(10)\right)}$, while the author~\cite[Corollary 1]{SimonicSonRH} derived a bound for the Riemann zeta-function which is valid for all $t\geq 2\pi$. Effective upper and lower bounds for $\zeta(s)$ right to the critical line were provided in~\cite{SimonicCS}, thus also covering the region not enclosed by~\eqref{eq:RegionForSigma}, i.e., near the critical line.

In~\cite{SimonicCS} the main purpose of having such bounds was establishing conditional (RH) and explicit estimates for the Mertens function $M(x)=\sum_{n\leq x}\mu(n)$ and for the number of $k$-free numbers, see~\cite[Theorem 2]{SimonicCS}, where $x\geq1$ and $\mu(n)$ is the M\"{o}bius function. Unfortunately, bounds we have obtained are valid for a very large $x$, for example
\[
\left|M(x)\right| \leq 0.505x^{0.99}\log{x}, \quad x\geq 10^{10^{4.543}}.
\]
Similarly, estimates for $m(x)=\sum_{n\leq x}\mu(n)/n$ were also provided. A method~\cite[Remark 1]{SimonicCS} was proposed to extend validity of estimates of the form $M(x)\ll x^{\alpha}$ for fixed $\alpha\in(1/2,1)$ by employing bounds like~\eqref{eq:logLGeneral}. Here we are able to prove the following.

\begin{theorem}
\label{thm:mertens}
Assume the Riemann Hypothesis. Then
\begin{gather}
\left|M(x)\right| \leq 555.71x^{0.99} + 1.94\cdot10^{14}x^{0.98}, \label{eq:thmMertens} \\
\left|m(x)\right| \leq \frac{56126.71}{x^{0.01}} + \frac{9.894\cdot10^{15}}{x^{0.02}}, \label{eq:thmMertens2}
\end{gather}
for $x\geq 1$.
\end{theorem}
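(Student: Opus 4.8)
The plan is to recover $M(x)$ from a truncated Perron integral, move the contour to a vertical line inside the critical strip and estimate it with Corollary~\ref{cor:zeta}, and then deduce the bound for $m(x)$ by partial summation; this refines~\cite[Theorem~2]{SimonicCS} along the lines indicated in~\cite[Remark~1]{SimonicCS}.

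First I would fix a real number $\sigma_1\in(1/2,1)$ below $0.99$, put $\kappa=1+1/\log{x}$, and start from an explicit truncated Perron formula
\[
M(x)=\frac{1}{2\pi\ie}\int_{\kappa-\ie T}^{\kappa+\ie T}\frac{x^{s}}{s\,\zeta(s)}\,\dif{s}+E(x,T)
\]
for $x\notin\Z$ and a free parameter $T\geq 2$, where $E(x,T)$ is bounded explicitly in $x$ and $T$ using $\left|\zeta(\kappa)^{-1}\right|\leq\zeta(\kappa)\ll\log{x}$ and $\left|\mu(n)\right|\leq 1$. Under the Riemann Hypothesis the hypotheses of Corollary~\ref{cor:zeta} hold automatically: $\zeta(z)\neq 0$ for $\Re\{z\}>1/2$, so $\mathscr{R}\left(10^{3},1,t\right)$ is zero-free and, in fact, $x^{s}/(s\zeta(s))$ is holomorphic in $\Re\{s\}>1/2$ — the apparent pole at $s=1$ is cancelled by the simple zero of $\zeta^{-1}$ there. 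Hence the segment may be shifted to $\Re\{s\}=\sigma_1$ without crossing any pole, producing the vertical integral on $\Re\{s\}=\sigma_1$, two horizontal integrals $H^{\pm}$ joining $\sigma_1\pm\ie T$ to $\kappa\pm\ie T$, and $E(x,T)$.

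The crucial step is to estimate $\left|\zeta(s)^{-1}\right|=\exp\{-\Re\log{\zeta(s)}\}\leq\exp\{\left|\log{\zeta(s)}\right|\}$ on these pieces. For $\left|t\right|\geq 10^{4}$ Corollary~\ref{cor:zeta}(a) gives $\left|\log{\zeta(\sigma+\ie t)}\right|\leq a_{1}\left(b_{1}\log{\left|t\right|}\right)^{2(1-\sigma)}\log{\log{\left|t\right|}}$ throughout the admissible range of $\sigma$, while for $\left|t\right|\leq 10^{4}$ one inserts a direct numerical upper bound for $\left|\zeta^{-1}\right|$ on the relevant compact set (which is zero-free under RH). Since $2(1-\sigma)\leq 1$ for $\sigma\geq 1/2$, taking $T$ as a fixed power of $x$ makes all these contribute only factors of size $x^{o(1)}$: the vertical integral is
\[
\ll x^{\sigma_1}\log{T}\cdot\exp\bigl\{a_{1}\left(b_{1}\log{T}\right)^{2(1-\sigma_1)}\log{\log{T}}\bigr\},
\]
each $H^{\pm}$ is $\ll (x^{\kappa}/T)\exp\bigl\{a_{1}\left(b_{1}\log{T}\right)^{2(1-\sigma_1)}\log{\log{T}}\bigr\}$, and $E(x,T)\ll x(\log{x})^{O(1)}/T$. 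Choosing $T=x^{1-\sigma_1}$ balances the three, leaving $\left|M(x)\right|\ll x^{\sigma_1}\Phi(x)$ with a slowly growing $\Phi$ whose logarithm is $\ll(\log{x})^{2(1-\sigma_1)}\log{\log{x}}=o(\log{x})$. Consequently $\Phi(x)\leq x^{0.01}$ for $x$ beyond an explicit threshold $x_{0}$, which — with $\sigma_1$ picked so that $\sigma_1+0.01=0.99$ — gives $\left|M(x)\right|\leq 555.71\,x^{0.99}$ for $x\geq x_{0}$, while for $1\leq x\leq x_{0}$ the trivial bound $\left|M(x)\right|\leq\fl{x}\leq x=x^{0.02}x^{0.98}\leq x_{0}^{0.02}x^{0.98}$ applies; adding the two ranges yields~\eqref{eq:thmMertens} for all $x\geq 1$, with the constant $1.94\cdot 10^{14}$ reflecting $x_{0}^{0.02}$.

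Finally,~\eqref{eq:thmMertens2} is an elementary corollary. Since $\sum_{n\geq 1}\mu(n)/n=0$ and $M(x)/x\to 0$, partial summation gives
\[
m(x)=\frac{M(x)}{x}-\int_{x}^{\infty}\frac{M(t)}{t^{2}}\,\dif{t},
\]
so inserting~\eqref{eq:thmMertens} and using $\int_{x}^{\infty}t^{-1.01}\,\dif{t}=100\,x^{-0.01}$, $\int_{x}^{\infty}t^{-1.02}\,\dif{t}=50\,x^{-0.02}$,
\[
\left|m(x)\right|\leq (1+100)\cdot 555.71\,x^{-0.01}+(1+50)\cdot 1.94\cdot 10^{14}\,x^{-0.02},
\]
which is exactly~\eqref{eq:thmMertens2}. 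The main obstacle is the estimation step: pushing through genuinely explicit bounds for $E(x,T)$, for $H^{\pm}$, and for $\left|\zeta^{-1}\right|$ in the range $\left|t\right|\leq 10^{4}$, and then optimizing $\sigma_1$, $T$ and $x_{0}$ precisely enough that the numerical constants in~\eqref{eq:thmMertens} come out as stated.
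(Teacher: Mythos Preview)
Your partial-summation derivation of~\eqref{eq:thmMertens2} from~\eqref{eq:thmMertens} is exactly what the paper does; the identities $56126.71=101\cdot555.71$ and $9.894\cdot10^{15}=51\cdot1.94\cdot10^{14}$ confirm it. For~\eqref{eq:thmMertens}, however, both your route and your reading of the two constants diverge from the paper's argument.

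The paper does not use a truncated Perron formula with a moving cut-off $T=x^{1-\sigma_1}$, horizontal segments $H^{\pm}$, and a Perron remainder $E(x,T)$. It passes instead through the smoothed sum $\widehat{M}(x)=\sum_{n\le x}(x-n)\mu(n)$: the quotient $(\widehat{M}(x+h)-\widehat{M}(x))/h$ approximates $M(x)$ up to $h+1$ and equals an \emph{absolutely convergent} integral along $\Re\{s\}=\sigma_0$ with kernel $\bigl((x+h)^{s+1}-x^{s+1}\bigr)/\bigl(s(s+1)\zeta(s)\bigr)$ --- no contour shift, no truncation error. The extra factor $s+1$ in the denominator lets the vertical line be split at a \emph{fixed} height $T_1$ and at $\lambda x/h$. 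With $\sigma_0=0.98$, $T_1=2.6\cdot10^{7}$, $\lambda=2$, \emph{both} terms of~\eqref{eq:thmMertens} fall directly out of this formula (Theorem~\ref{thm:mainMertens}): the $x^{0.98}$ coefficient is $\tfrac{1}{\pi\sigma_0}(1+\lambda/T_1)^{\sigma_0}\int_{0}^{T_1}\left|\zeta(\sigma_0+\ie u)\right|^{-1}\dif{u}$, evaluated numerically on $[0,11520]$ and bounded on $[11520,T_1]$ via Corollary~\ref{cor:zeta}, yielding $1.94\cdot10^{14}$; the term $x^{(\sigma_0+\varepsilon_0)/(1+\varepsilon_0)}\le x^{0.99}$ coming from the remaining ranges and from $h+1$ yields $555.71$. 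Only afterwards is it checked that $|M(x)|\le x$ already lies below the sum of the two terms on $[1,10^{711}]$. Thus $1.94\cdot10^{14}$ is not $x_0^{0.02}$ for a threshold $x_0$ but the value of a concrete integral of $|\zeta|^{-1}$; the single-term bound $|M(x)|\le 555.71\,x^{0.99}$ for large $x$ is not what is established, and with an unsmoothed Perron error of order $x/T=x^{0.98}$ times logarithmic factors it is not clear your scheme would reproduce these particular numbers.
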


Theorem~\ref{thm:mertens} follows from a more general Theorem~\ref{thm:mainMertens}. Observe that~\eqref{eq:thmMertens} improves on the trivial estimate $\left|M(x)\right| \leq x$ when $x\geq 10^{714.4}$ and improves unconditional estimate~\cite[Theorem 1.1]{Ramare2013} for $x\geq 10^{976.8}$, while~\eqref{eq:thmMertens2} improves~\cite[Corollary 1.2]{Ramare2013} for $x\geq 10^{1052.1}$. The constants in the second terms in~\eqref{eq:thmMertens} and~\eqref{eq:thmMertens2} may be improved by solving a specific computational problem, see Remark~\ref{rem:computer}. It might be interesting to generalize Theorem~\ref{thm:mertens} to the Mertens function in arithmetic progressions by using Corollary~\ref{cor:dirichlet}.

The outline of this paper is as follows. In Section~\ref{sec:Selberg} we revise some properties of functions in the Selberg class and prove inequality~\eqref{eq:logL} by establishing a result on the growth of such functions on the 1-line, see Theorem~\ref{thm:zerofree}, while also deriving its effective versions for $\zeta(s)$, $L(s,\chi)$ and $\zeta_{\K}(s)$, see Examples~\ref{rem:backlund},~\ref{rem:dirichlet},~\ref{rem:dedekind}. The proofs of Theorem~\ref{thm:generalSelberg} and Corollaries~\ref{cor:zeta},~\ref{cor:dirichlet},~\ref{cor:dedekind} are provided in Section~\ref{sec:proofMain}, while the proof of Theorem~\ref{thm:mertens} is given in Section~\ref{sec:proofMertens}.

\section{The Selberg class of functions}
\label{sec:Selberg}

In this section we are providing a brief overview of some properties of the Selberg class of functions. The emphasis is on studying the growth of such functions on the 1-line (Section~\ref{sec:growth}) and on deriving an explicit convexity estimates right to the critical line for the Riemann, Dirichlet, and Dedekind zeta-functions (Section~\ref{sec:convexity}).

\subsection{Preliminaries}

The Selberg class $\mathcal{SP}$ of functions with a polynomial Euler product consists of Dirichlet series
\begin{equation}
\label{eq:DS}
\mathcal{L}(s) = \sum_{n=1}^{\infty} \frac{a(n)}{n^s}
\end{equation}
satisfying the following axioms:
\begin{enumerate}
  \item \emph{Ramanujan hypothesis}. $a(n)\ll_{\varepsilon} n^{\varepsilon}$ for any $\varepsilon>0$.
  \item \emph{Analytic continuation}. There exists $k\in\N_{0}$ such that $(s-1)^k\mathcal{L}(s)$ is an entire function of finite order.
  \item \emph{Functional equation}. $\mathcal{L}(s)$ satisfies $\Lambda_{\mathcal{L}}(s)=\omega\overline{\Lambda_{\mathcal{L}}(1-\bar{s})}$, where
        \[
        \Lambda_{\mathcal{L}}(s)=\mathcal{L}(s)Q^s\prod_{j=1}^{f}\Gamma\left(\lambda_{j}s+\mu_j\right)
        \]
        with $\left(Q,\lambda_j\right)\in\R_{+}^2$, and $\left(\mu_j,\omega\right)\in\C^2$ with $\Re\{\mu_j\}\geq 0$ and $|\omega|=1$.
  \item \emph{Polynomial Euler product}. There exists $m\in\N$, and for every prime number $p$ there are $\alpha_j(p)\in\C$, $1\leq j\leq m$, such that
        \[
        \mathcal{L}(s) = \prod_{p}\prod_{j=1}^{m}\left(1-\frac{\alpha_j(p)}{p^s}\right)^{-1}.
        \]
\end{enumerate}
It is well known that axiom~(1) implies the absolute convergence of~\eqref{eq:DS} in the half-plane $\sigma>1$, and that axioms~(1) and~(4) imply that $\left|\alpha_j(p)\right|\leq 1$ for $1\leq j\leq m$ and all prime numbers $p$, see~\cite[Lemma 2.2]{SteudingBook}. Therefore, if $\mathcal{L}\in\mathcal{SP}$, then
\begin{equation}
\label{eq:logLabs}
\left|\log{\mathcal{L}(s)}\right| = \left|\sum_{p}\sum_{k=1}^{\infty}\sum_{j=1}^{m}\frac{\alpha_j(p)^k}{kp^{ks}}\right| \leq m\sum_{p}\sum_{k=1}^{\infty}\frac{1}{kp^{k\sigma_0}} = m\log{\zeta\left(\sigma_0\right)}
\end{equation}
is true for $\sigma\geq\sigma_0>1$. Inequality~\eqref{eq:logLabs} implies the following two approximations
\begin{gather}
\left|\log{\mathcal{L}(s)}\right| \leq m\log{\left(1+\frac{1}{\sigma_0-1}\right)} \leq \frac{m}{\sigma_0-1}, \label{eq:logLabs1} \\
\left|\log{\mathcal{L}(s)}\right| \leq m\log{\frac{1}{\sigma_0-1}} + m\gamma\left(\sigma_0-1\right), \label{eq:logLabs2}
\end{gather}
where $\gamma$ is the Euler--Mascheroni constant. Estimate~\eqref{eq:logLabs1} follows by comparison with the integral, while~\eqref{eq:logLabs2} is a consequence of~\cite[Lemma 5.4]{Ramare} and is better than~\eqref{eq:logLabs1} when $\sigma_0$ is close to $1$. Note that $\mathcal{SP}\subseteq\mathcal{S}$ where $\mathcal{S}$ is the classical Selberg class of functions introduced in~\cite{SelbergOldAndNew}, i.e., axiom~(4) is replaced by
\[
\mathcal{L}(s) = \prod_{p}\exp{\left(\sum_{k=1}^{\infty}\frac{b\left(p^k\right)}{p^{ks}}\right)}
\]
where coefficients $b\left(p^k\right)$ satisfy $b\left(p^k\right)\ll p^{k\theta}$ for some $0\leq\theta<1/2$. It is conjectured that $\mathcal{SP}=\mathcal{S}$.

The degree of $\mathcal{L}\in\mathcal{S}$ is defined by $\sdeg=2\sum_{j=1}^{f}\lambda_j$. Because $N_{\mathcal{L}}(T)\sim\frac{1}{\pi}\sdeg T\log{T}$, where $N_{\mathcal{L}}(T)$ counts the number of zeros\footnote{This is an asymptotic formulation of the Riemann--von Mangoldt formula for the Selberg class, see~\cite[Theorem 7.7]{SteudingBook} for more general version and~\cite{Palojarvi} for an effective estimate.} of $\mathcal{L}(s)$ with $\sigma\in[0,1]$ and $|t|\leq T$, it follows that $\sdeg$ is well-defined although parameters from axiom~(3) are not unique. Note that $\dif{}_{1}=0$ and $\dif{}_{\zeta}=1$. It is known~\cite[Theorem 6.1]{SteudingBook} that $\sdeg\geq1$ for every $\mathcal{L}\in\mathcal{S}\setminus\{1\}$, and it is conjectured that $\sdeg$ is always a positive integer. Kaczorowski and Perelli proved that $\zeta(s)$ and shifts $L(s+\ie\theta,\chi)$, $\theta\in\R$, of Dirichlet $L$-functions ($\sigma>1$)
\[
L\left(s,\chi\right) = \sum_{n=1}^{\infty}\frac{\chi(n)}{n^s} = \prod_{p}\left(1-\frac{\chi(p)}{p^s}\right)^{-1}
\]
attached to a primitive character $\chi$ modulo $q>1$, are the only functions in $\mathcal{S}$ with degree $1$, see~\cite{SoundSelbergOne} for a simplified proof. Important examples are also Dedekind zeta-functions ($\sigma>1$)
\[
\zeta_{\K}(s) = \sum_{\mathfrak{a}}\frac{1}{N(\mathfrak{a})^s} = \prod_{\mathfrak{p}}\left(1-\frac{1}{N(\mathfrak{p})^s}\right)^{-1}
= \prod_{p}\prod_{\substack{j=1 \\ \mathfrak{p}_j|(p)}}^{r}\left(1-\frac{1}{p^{sf_j}}\right)^{-1},
\]
where $\K$ is a number field, $N(\cdot)$ is the norm of an ideal, $\mathfrak{a}$ runs through all non-zero ideals and $\mathfrak{p}$ runs through all prime ideals of the ring of integers of $\K$. The last equality follows because any rational prime number $p$ has a unique factorization $(p) = \prod_{j=1}^{r}\mathfrak{p}_{j}^{e_j}$ with $N\left(\mathfrak{p}_{j}\right)=p^{f_j}$ and $\sum_{j=1}^{r}e_jf_j=n_{\K}\de\left[\K:\Q\right]$, where the non-negative integers $e_j$, $f_j$ and $r$ depend on $p$. Therefore, $r\leq n_{\K}$, which implies a polynomial Euler product representation for $m=n_{\K}$. We have that $\zeta_{\K}$ belongs to $\mathcal{SP}$ and $\dif{}_{\zeta_{\K}}=n_{\K}$. Observe also that $\zeta_{\Q}(s)=\zeta(s)$.

The functional equation from axiom~(3) can be written as $\mathcal{L}(s)=\Delta_{\mathcal{L}}(s)\overline{\mathcal{L}\left(1-\bar{s}\right)}$, where
\[
\Delta_{\mathcal{L}}(s) \de \omega Q^{1-2s}\prod_{j=1}^{f}\frac{\Gamma\left(\lambda_j(1-s)+\overline{\mu_j}\right)}{\Gamma\left(\lambda_{j}s+\mu_j\right)}.
\]
Taking $\mathcal{L}\in\mathcal{S}$, we can use Stirling's formula to prove
\begin{equation}
\label{eq:boundDelta}
\Delta_{\mathcal{L}}(s)\ll |t|^{\sdeg\left(\frac{1}{2}-\sigma\right)},
\end{equation}
where this estimate is uniform in $\sigma\in\left[\sigma_1,\sigma_2\right]$ for fixed $\sigma_1\leq\sigma_2$, see~\cite[Lemma 6.7]{SteudingBook}. It is possible to make~\eqref{eq:boundDelta} uniform also in $\mathcal{L}$ by means of the data of the functional equation, but such an approach is not needed in the present paper.

\subsection{On the growth of $\mathcal{L}(s)$ on the 1-line}
\label{sec:growth}

It is convenient to introduce an additional axiom which concerns the growth of $\mathcal{L}\left(1+\ie t\right)$ when $\mathcal{L}\in\mathcal{S}$ and $|t|\to\infty$.

\begin{enumerate}
  \item[(5)] \emph{Growth on the $1$-line}. $\mathcal{L}\left(1+\ie t\right) \ll \log^{\ell}|t|$ for some $\ell>0$.
\end{enumerate}

In the case of the Riemann zeta-function it is a standard result that we can take $\ell=1$, while a substantial improvement to $\ell=2/3$ requires techniques from the proof of the Vinogradov--Korobov's zero-free region, see~\cite[Chapter 6]{Ivic}. Note that the former result can be proved by using the approximate functional equation for $\zeta(s)$. Similar approach is also used in the proof of Theorem~\ref{thm:zerofree}~(b).

Dixit and Mahatab introduced in~\cite[Definition 1.2]{DixitKamala} a new class of functions $\mathcal{G}$. We say that $\mathcal{L}\in\mathcal{G}$, if the series~\eqref{eq:DS} is absolutely convergent for $\sigma>1$, $a(1)=1$, and $\mathcal{L}$ satisfies beside axioms~(2) and~(4) also the following two axioms:

\begin{enumerate}
  \item[(6)] \emph{Zero-free region}. There exists $c_{\mathcal{L}}>0$ such that $\mathcal{L}$ has no zeros in the region
  \[
  \left\{z\in\C \colon \Re\{z\}\geq 1-\frac{c_{\mathcal{L}}}{\log{\left(\left|\Im\{z\}\right|+2\right)}}\right\},
  \]
  except the possible Siegel zero, i.e., real exceptional zero of $\mathcal{L}$ in the neighbourhood of $1$.
\end{enumerate}

\begin{enumerate}
  \item[(7)] \emph{Growth condition}. Define $\mu_{\mathcal{L}}^{\ast}(\sigma)\de \inf\left\{\lambda>0\colon \mathcal{L}(\sigma+\ie t)\ll_{\sigma}|t|^{\lambda}\right\}$. Then $\mu_{\mathcal{L}}^{\ast}(\sigma) \ll 1-2\sigma$ uniformly for $\sigma<0$.
\end{enumerate}

Observe that class $\mathcal{G}$ does not require a functional equation; axiom~(3) implies axiom~(7), but the latter is sufficient to show that then $\mathcal{L}(s)$ is polynomially bounded in vertical strips by using the Phragm\'{e}n--Lindel\"{o}f principle. It is expected that $\mathcal{S}\subseteq\mathcal{G}$.

The next theorem explores possible connections between classes $\mathcal{G}$, $\mathcal{S}$, $\mathcal{SP}$ and axiom~(5). As usual, $d_{\alpha}(n)$ denotes the number of ways positive integer $n$ can be written as a product of $\alpha\geq 2$ factors, and we extend this to $d_{1}(n)\equiv1$.

\begin{theorem}
\label{thm:zerofree}
The following is true:
\begin{enumerate}
  \item[(a)] Let $\mathcal{L}\in\mathcal{G}$ and take $\varepsilon>0$. Then $\mathcal{L}$ satisfies axiom~(5) with $\ell=m+\varepsilon$.
  \item[(b)] Let $\mathcal{L}\in\mathcal{S}$ and assume $a(n)\ll d_{\alpha}(n)$ for some positive integer $\alpha$. Then $\mathcal{L}$ satisfies axiom~(5) with $\ell=\alpha$.
  \item[(c)] Let $\mathcal{L}\in\mathcal{SP}$. Then $\mathcal{L}$ satisfies axiom~(5) with $\ell=m$.
\end{enumerate}
\end{theorem}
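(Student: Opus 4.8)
The plan is to handle all three parts by the same mechanism: bound $\mathcal{L}(1+\ie t)$ by a Dirichlet polynomial $\sum_{n\leq X}a(n)n^{-s}$ of length $X$ a fixed power of $|t|$, and then estimate that polynomial trivially by $\sum_{n\leq X}\left|a(n)\right|/n$. What changes between the parts is the analytic device that carries us from the region of absolute convergence to the $1$-line, and how well the coefficients are controlled. I would prove part~(b) first, deduce part~(c) from it, and treat part~(a) as a variant.

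For part~(b), note that since $\mathcal{L}\in\mathcal{S}$, the functional equation together with~\eqref{eq:boundDelta} and the Phragm\'{e}n--Lindel\"{o}f principle supplies both $\Delta_{\mathcal{L}}(1+\ie t)\ll|t|^{-\sdeg/2}$ and a polynomial bound for $\mathcal{L}$ on fixed vertical strips; together with the Ramanujan hypothesis these are exactly the hypotheses for the approximate functional equation in the Selberg class (cf.\ \cite{SteudingBook}),
\[
\mathcal{L}(s)=\sum_{n\leq X}\frac{a(n)}{n^{s}}+\Delta_{\mathcal{L}}(s)\sum_{n\leq Y}\overline{a(n)}\,n^{s-1}+E(s),\qquad XY\asymp_{\mathcal{L}}|t|^{\sdeg},
\]
with $E(s)$ bounded by a negative power of $|t|$. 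Putting $s=1+\ie t$ and choosing $X\asymp_{\mathcal{L}}|t|^{\sdeg}$ so that $Y\asymp_{\mathcal{L}}1$, the middle term is $\ll_{\mathcal{L}}|t|^{-\sdeg/2}\sum_{n\leq Y}\left|a(n)\right|\ll_{\mathcal{L}}|t|^{-\sdeg/2}$ and $E(1+\ie t)\to0$, so $\left|\mathcal{L}(1+\ie t)\right|\leq\sum_{n\leq X}\left|a(n)\right|/n+O_{\mathcal{L}}(1)$. Invoking the hypothesis $a(n)\ll d_{\alpha}(n)$ and the elementary estimate $\sum_{n\leq X}d_{\alpha}(n)/n\leq\prod_{p\leq X}\left(1-p^{-1}\right)^{-\alpha}\ll_{\alpha}(\log X)^{\alpha}$, and using $\log X\asymp_{\mathcal{L}}\log|t|$, gives $\mathcal{L}(1+\ie t)\ll_{\mathcal{L},\alpha}(\log|t|)^{\alpha}$, i.e.\ axiom~(5) with $\ell=\alpha$. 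Part~(c) is then immediate: for $\mathcal{L}\in\mathcal{SP}$ one has $\left|\alpha_{j}(p)\right|\leq1$ by \cite[Lemma~2.2]{SteudingBook}, so expanding the local factor, $a\left(p^{k}\right)=\sum_{k_{1}+\dots+k_{m}=k}\alpha_{1}(p)^{k_{1}}\cdots\alpha_{m}(p)^{k_{m}}$, whence $\left|a\left(p^{k}\right)\right|\leq\binom{k+m-1}{m-1}=d_{m}\left(p^{k}\right)$; multiplicativity of $a$ and $d_{m}$ yields $\left|a(n)\right|\leq d_{m}(n)$ for all $n$, and part~(b) with $\alpha=m$ finishes it.

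For part~(a) there is no functional equation, but axiom~(7), together with absolute convergence for $\sigma>1$ and finite order (axiom~(2)), yields by Phragm\'{e}n--Lindel\"{o}f a polynomial strip bound $\mathcal{L}(\sigma+\ie t)\ll_{\varepsilon}|t|^{\kappa(1-\sigma)+\varepsilon}$ for some $\kappa>0$. This is enough to run a smoothed truncated Perron formula: with $\phi$ a fixed smooth cutoff and $X=|t|^{A}$ for $A$ large, $\mathcal{L}(1+\ie t)=\sum_{n}a(n)\phi(n/X)n^{-1-\ie t}+O_{\mathcal{L}}(1)$, the pole at $s=1$ (if present) contributing $O(1/|t|)$ and the shifted contour a negative power of $|t|$. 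It then remains to bound $\sum_{n\leq X}\left|a(n)\right|/n\ll_{\varepsilon}(\log X)^{m+\varepsilon}$, which the polynomial Euler product of order $m$ provides; here the absence of the Ramanujan hypothesis — in general one only has $\left|\alpha_{j}(p)\right|\leq p$ — is what is absorbed into the $\varepsilon$, and this is the step that genuinely relies on the classical zero-free region (axiom~(6)) to keep the coefficients in check. With $\log X\asymp_{\mathcal{L}}\log|t|$ this yields axiom~(5) with $\ell=m+\varepsilon$.

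The main obstacle I anticipate is making the passage from $\sigma>1$ to the $1$-line with an error term that is genuinely $o(1)$: in part~(b) this is the bookkeeping in the approximate functional equation (balancing $X$ against $Y$, controlling $E$), while in part~(a) it is securing the coefficient bound $\sum_{n\leq X}\left|a(n)\right|/n\ll_{\varepsilon}(\log X)^{m+\varepsilon}$ without the Ramanujan hypothesis. The remaining ingredients — the divisor-sum estimate, the symmetric-function identity giving $\left|a(n)\right|\leq d_{m}(n)$, and the bound $\Delta_{\mathcal{L}}(1+\ie t)\ll|t|^{-\sdeg/2}$ — are routine.
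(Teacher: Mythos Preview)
Your treatment of parts (b) and (c) matches the paper's: approximate $\mathcal{L}(1+\ie t)$ by a Dirichlet polynomial of length $X\asymp|t|^{\sdeg}$ and bound it via $\sum_{n\leq X}d_\alpha(n)/n\ll(\log X)^\alpha$. The paper derives the needed approximation directly from a smoothed Mellin integral rather than citing an approximate functional equation, but this is the same mechanism; your deduction of (c) from (b) via $|a(n)|\leq d_m(n)$ is exactly the paper's.

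Your plan for part (a), however, has a genuine gap, and it lies precisely where you flag the obstacle. You propose to bound $\sum_{n\leq X}|a(n)|/n\ll_\varepsilon(\log X)^{m+\varepsilon}$ and say this ``genuinely relies on the classical zero-free region (axiom~(6)) to keep the coefficients in check.'' But axiom~(6) is a statement about where $\mathcal{L}(s)\neq0$; it gives no control on the size of individual $|a(n)|$. With only $|\alpha_j(p)|\leq p$ one can have $|a(p)|$ as large as $mp$, and then $\sum_{p\leq X}|a(p)|/p$ is already $\gg X/\log X$, not a power of $\log X$. Your smoothed Perron argument for $\mathcal{L}$ itself uses only axioms~(2), (7) and absolute convergence, so axiom~(6) never actually enters, and no amount of contour shifting will rescue the coefficient sum.

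The paper's proof of (a) uses the zero-free region in an entirely different way. It works with $\log\mathcal{L}$ rather than $\mathcal{L}$: one writes a truncated Perron formula for $\sum_{n\leq X}b_{\mathcal{L}}(n)n^{-1-\ie t}$, where $b_{\mathcal{L}}$ are the Dirichlet coefficients of $\log\mathcal{L}$, and shifts the contour leftwards to $\Re\{s\}=1-\sigma_1$ with $\sigma_1\asymp1/\log t$, which is permissible precisely because axiom~(6) guarantees $\log\mathcal{L}$ is holomorphic there. On that contour the HBC inequality together with axiom~(7) gives $\log\mathcal{L}\ll\log t$. This yields $\log\mathcal{L}(1+\ie t)=\log\mathcal{L}(1+\ie t;X)+O(1)$ for $X=\exp\bigl((\log t)^{1+\varepsilon/m}\bigr)$, and then Mertens' theorem on the truncated Euler product gives $|\mathcal{L}(1+\ie t;X)|\ll(\log X)^m=(\log t)^{m+\varepsilon}$. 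The $\varepsilon$ is thus forced by the \emph{narrowness} of the zero-free region (width $\asymp1/\log t$), which dictates how large $X$ must be for the shifted integral to be $O(1)$; it is not a by-product of weak coefficient bounds.
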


\begin{proof}
Firstly we are going to prove the assertion~(a) by following the method from~\cite{DixitKamala}. Take $\mathcal{L}\in\mathcal{G}$. Let $X\geq 2$, $\sigma>1$ and
\[
\mathcal{L}\left(s;X\right) \de \prod_{p\leq X}\prod_{j=1}^{m}\left(1-\frac{\alpha_{j}(p)}{p^s}\right)^{-1}, \quad \log{\mathcal{L}(s)} = \sum_{n=1}^{\infty}\frac{b_{\mathcal{L}}(n)}{n^s}.
\]
Axiom~(4) asserts that $b_{\mathcal{L}}(n)=0$ if $n\neq p^k$ and $\left|b_{\mathcal{L}}(n)\right|\leq m$ otherwise. For $\sigma\geq1$ it follows that
\begin{flalign}
\label{eq:LsX}
\log{\mathcal{L}\left(s;X\right)} &= \sum_{p\leq X}\sum_{k=1}^{\infty}\frac{b_{\mathcal{L}}\left(p^k\right)}{p^{ks}} \nonumber \\
&= \sum_{n\leq X}\frac{b_{\mathcal{L}}(n)}{n^s} + \left(\sum_{\sqrt{X}<p\leq X}\sum_{p^k>X} + \sum_{p\leq\sqrt{X}}\sum_{p^k>X}\right)\frac{b_{\mathcal{L}}\left(p^k\right)}{p^{ks}} \nonumber \\
&= \sum_{n\leq X}\frac{b_{\mathcal{L}}(n)}{n^s} + O\left(\sum_{\sqrt{X}<p\leq X}\sum_{k=2}^{\infty}\frac{1}{p^k} + \sum_{p\leq\sqrt{X}}\frac{1}{X}\right) \nonumber \\
&= \sum_{n\leq X}\frac{b_{\mathcal{L}}(n)}{n^s} + O\left(\frac{1}{\sqrt{X}}\right)
\end{flalign}
since $\left|p^{ks}\right|=p^{k\sigma}\geq p^k$.

For $t\geq 2$, $\alpha>0$ and $\varepsilon>0$ define
\[
\sigma_1 \de \frac{1}{\alpha\log{t}}, \quad \sigma_2 \de \frac{1}{\left(\log{t}\right)^{1+\varepsilon/m}}.
\]
By Perron's formula we have
\begin{equation}
\label{eq:perron1line}
\sum_{n\leq X}\frac{b_{\mathcal{L}}(n)}{n^{1+\ie t}} = \frac{1}{2\pi\ie}\int_{\sigma_2-\frac{\ie t}{2}}^{\sigma_2+\frac{\ie t}{2}} \log{\mathcal{L}\left(1+\ie t+z\right)}\frac{X^{z}}{z}\dif{z} + O\left(\frac{X^{\sigma_2}}{t\sigma_2}+\frac{\log{X}}{t}+\frac{1}{X}\right).
\end{equation}
Let
\[
\mathscr{C} \de \left\{z\in\C \colon 1-\sigma_1\leq\Re\{z\}\leq1+\sigma_2, \frac{t}{2}\leq\Im\{z\}\leq\frac{3t}{2}\right\}.
\]
By axiom~(6) there exist $\alpha$ and $t_0>0$ such that there are no zeros of $\mathcal{L}$-function in a neighbourhood of $\mathscr{C}$ for $t\geq t_0$. Moreover, one can use HBC inequality together with axiom~(7) and inequality~\eqref{eq:logLabs} to prove that $\log{\mathcal{L}(z)}\ll\log{t}$ for $z\in\partial\mathscr{C}$. Take $X=\exp{\left(\left(\log{t}\right)^{1+\varepsilon/m}\right)}$. By~\eqref{eq:LsX}, \eqref{eq:perron1line} and Cauchy's formula we then have
\begin{flalign*}
\log{\mathcal{L}\left(1+\ie t\right)} &= \log{\mathcal{L}\left(1+\ie t;X\right)} + O(1) \\
&+ \frac{1}{2\pi\ie}\left(\int_{\sigma_2+\frac{\ie t}{2}}^{-\sigma_1+\frac{\ie t}{2}}+\int_{-\sigma_1+\frac{\ie t}{2}}^{-\sigma_1-\frac{\ie t}{2}}+\int_{-\sigma_1-\frac{\ie t}{2}}^{\sigma_2-\frac{\ie t}{2}}\right)\log{\mathcal{L}\left(1+\ie t+z\right)}\frac{X^{z}}{z}\dif{z}.
\end{flalign*}
We obtain
\[
\int_{\sigma_2+\frac{\ie t}{2}}^{-\sigma_1+\frac{\ie t}{2}} \log{\mathcal{L}\left(1+\ie t+z\right)}\frac{X^{z}}{z}\dif{z} \ll \frac{X^{\sigma_2}\log{t}}{t\log{X}} \ll 1
\]
with the same result also for the third integral while the second integral may be bounded as
\[
\int_{-\sigma_1+\frac{\ie t}{2}}^{-\sigma_1-\frac{\ie t}{2}} \log{\mathcal{L}\left(1+\ie t+z\right)}\frac{X^{z}}{z}\dif{z} \ll
X^{-\sigma_1}\log^{2}{t} = \frac{\log^{2}{t}}{\exp{\left(\alpha^{-1}\left(\log{t}\right)^{\varepsilon/m}\right)}} \ll 1.
\]
Therefore, $\log{\mathcal{L}\left(1+\ie t\right)}=\log{\mathcal{L}\left(1+\ie t;X\right)} + O(1)$. Because
\[
\left|\mathcal{L}\left(1+\ie t;X\right)\right| \leq \prod_{p\leq X}\left(1-\frac{1}{p}\right)^{-m} \ll \log^{m}{X}
\]
by Mertens' third theorem, it follows that
\[
\mathcal{L}\left(1+\ie t\right) \ll \left|\mathcal{L}\left(1+\ie t;X\right)\right| \ll \log^{m}{X} = \left(\log{t}\right)^{m+\varepsilon}.
\]
In a similar way we can obtain such estimate also when $t$ is negative. The proof of Theorem~\ref{thm:zerofree}~(a) is thus complete.

We are going to prove the assertion~(b). Take $\mathcal{L}\in\mathcal{S}$, $x\geq1$ and $t_0>0$ sufficiently large. We can assume that $\mathcal{L}\not\equiv 1$ since otherwise the result is trivial. For $|t|\geq t_0$ we have
\begin{flalign*}
\sum_{n=1}^{\infty}\frac{a(n)}{n^{1+\ie t}}e^{-\left(\frac{n}{x}\right)^{\log{|t|}}} &= \frac{1}{2\pi\ie}\int_{2-\ie\infty}^{2+\ie\infty}\frac{x^z}{z}\mathcal{L}\left(1+\ie t+z\right)\Gamma\left(1+\frac{z}{\log{|t|}}\right)\dif{z} \\
&= \frac{1}{2\pi\ie}\int_{-\frac{3}{2}-\ie\infty}^{-\frac{3}{2}+\ie\infty}\frac{x^z \overline{\mathcal{L}\left(\ie t-\bar{z}\right)}\Delta_{\mathcal{L}}\left(1+\ie t+z\right)\Gamma\left(1+\frac{z}{\log{|t|}}\right)}{z}\dif{z} \\
&+ \mathcal{L}\left(1+\ie t\right) + \frac{\ie x^{-\ie t}}{t}\Gamma\left(1-\frac{\ie t}{\log{|t|}}\right)\mathrm{Res}\left(\mathcal{L}(s),1\right).
\end{flalign*}
The first equality follows from the classical Mellin integral, while the second equality follows by moving the line of integration to $\Re\{z\}=-3/2$, using the functional equation, and detecting two poles at $z=0$ and $z=-\ie t$ of the integrand which are inside the contour. It is clear that the second residue is $O(1)$. We are going to demonstrate that this is also true for the second integral in the latter expression if we take $x$ large enough. Denote this integral by $\mathcal{I}$ and let $z=-3/2+\ie u$, $u\in\R$. Then
\[
\Delta_{\mathcal{L}}\left(1+\ie t+z\right) \ll \left(|u+t|+1\right)^{\sdeg} \ll \left\{\begin{array}{ll}
                                                        |t|^{\sdeg}, & |u|\leq\log{|t|}, \\
                                                        |t|^{\sdeg}|u|^{\sdeg}, & |u|>\log{|t|},
                                                      \end{array}
                                               \right.
\]
and
\[
\Gamma\left(1+\frac{z}{\log{|t|}}\right) \ll \left\{\begin{array}{ll}
                                                        1, & |u|\leq\log{|t|}, \\
                                                        \left(\frac{|u|}{\log{|t|}}\right)^{\frac{1}{2}}\exp{\left(-\frac{|u|}{\log{|t|}}\right)}, & |u|>\log{|t|},
                                                      \end{array}
                                               \right.
\]
while the implied constants are uniform in $u$ and $t$. Obviously, $\mathcal{L}\left(\ie t-\bar{z}\right)\ll 1$. Splitting the range of integration in $\mathcal{I}$ into two parts, $|u|\leq\log{|t|}$ and $|u|>\log{|t|}$, we obtain
\[
\mathcal{I} \ll x^{-\frac{3}{2}}|t|^{\sdeg}\left(\log{|t|}+\Gamma\left(\frac{3}{2}+\sdeg,1\right)\left(\log{|t|}\right)^{\sdeg}\right) \ll x^{-\frac{3}{2}}|t|^{\sdeg}\log^{\sdeg}{|t|}.
\]
%\begin{flalign*}
%\mathcal{L}(s) &= \sum_{n\leq 2x}\frac{a(n)}{n^s}e^{-\left(n/x\right)^{l}} + \hat{\delta}_{\delta}\Delta_{\mathcal{L}}(s)\sum_{n\leq y}\frac{\overline{a(n)}}{n^{1-s}} \\
%&-\frac{1}{2\pi\ie}\int_{\substack{\Re\{z\}=\delta-\sigma \\ \left|\Im\{z\}\right|\leq l^2}} \frac{x^z}{z}\Gamma\left(1+\frac{z}{l}\right)\Delta_{\mathcal{L}}(s+z)\sum_{n\leq y}\frac{\overline{a(n)}}{n^{1-s-z}}\dif{z} + o(1),
%\end{flalign*}
%where $T$ is large enough, $l=\log^2{T}$, $\sigma\in[0,1]$, $\log^4{T}\leq |t|\leq T$, $1\ll x\ll T^{c}$ for fixed $c>0$, $y\geq1$, $\delta\in(0,1)\setminus\{\sigma\}$, and $xy\geq (3T)^{\sdeg}$. Also, $\hat{\delta}_{\delta}=1$ if $\delta>\sigma$ and zero otherwise, while $o(1)$ indicates a term which value goes to zero when $T\to\infty$. Take $\sigma=1$, $\delta=1/2$, $|t|=T$, $x=(3T)^{\sdeg}$ and $y=1$.
From the last expression we can see that $\mathcal{I}=O(1)$ if $x=|t|^{\sdeg}$. With such choice for $x$ we also have
\begin{flalign*}
\sum_{n>ex}\frac{a(n)}{n^{1+\ie t}}e^{-\left(\frac{n}{x}\right)^{\log{|t|}}} &\ll \sum_{n>ex}e^{-\left(\frac{n}{x}\right)^{\log{|t|}}}
\leq e^{-|t|} + x\int_{e}^{\infty}e^{-u^{\log{|t|}}}\dif{u} \\
&\ll e^{-|t|} + x|t|^{-\log{|t|}} \ll 1,
\end{flalign*}
where we used $a(n)\ll n$ and $\log^{2}{|t|}+u\leq u^{\log{|t|}}$, the last inequality valid for $u\geq 2$ and $\log{|t|}\geq 5$. All that finally implies
\[
\mathcal{L}\left(1+\ie t\right) = \sum_{n\leq e|t|^{\sdeg}}\frac{a(n)}{n^{1+\ie t}}e^{-\left(n/|t|^{\sdeg}\right)^{\log{|t|}}} + O(1).
\]
Because $a(n)\ll d_{\alpha}(n)$ by the assumption, and $\sum_{n\leq X}d_{\alpha}(n)\ll X\left(\log{X}\right)^{\alpha-1}$, it follows that
\[
\mathcal{L}\left(1+\ie t\right) \ll \sum_{n\leq e|t|^{\sdeg}}\frac{d_{\alpha}(n)}{n} \ll \log^{\alpha}{|t|}
\]
by partial summation. The proof of statement~(b) is thus complete.

The proof of statement~(c) now easily follows from the assertion~(b) since one can observe that the estimate for the local roots $\left|\alpha_{j}(p)\right|\leq1$ implies $\left|a(n)\right|\leq d_{m}(n)$, see~\cite[Lemma 2.2]{SteudingBook}, and the former is true for functions in $\mathcal{SP}$.
\end{proof}

Our proof of Theorem~\ref{thm:zerofree}~(b) follows similar approach as the proof of a smooth version of the approximate functional equation for $\zeta(s)$, see~\cite[Theorem 4.4]{Ivic}, and also~\cite[Proposition 2.3]{MazhoudaOmar} for a generalization to the Selberg class and~\cite[Theorem 5.3]{IKANT} for a generalization to $L$-functions. In correspondence with the latter functions, our condition $a(n)\ll d_{\alpha}(n)$ can be viewed as an analog to the Ramanujan--Petersson conjecture. However, for our purpose we do not require a complete result, so the proof can be simplified.

\subsection{Convexity estimates for $\mathcal{L}(s)$}
\label{sec:convexity}

Assuming axiom~(5), it is easy to prove the precise form of the convexity-type result for $\mathcal{L}(s)$.

\begin{proposition}
\label{prop:convex}
Take $\sigma_0<0$, $\mathcal{L}\in\mathcal{S}$ and assume that $\mathcal{L}$ also satisfies axiom~(5). Then
\[
\mathcal{L}(s) \ll \left\{\begin{array}{ll}
                                          |t|^{\sdeg\left(\frac{1}{2}-\sigma\right)}\log^{\ell}|t|, & \sigma_0\leq\sigma < 0, \\
                                          |t|^{\frac{1}{2}\sdeg(1-\sigma)}\log^{\ell}|t|, & 0\leq\sigma\leq1, \\
                                          \log^{\ell}|t|, & \sigma>1,
                                        \end{array}
                                        \right.
\]
where the implied constants are uniform in $\sigma$.
\end{proposition}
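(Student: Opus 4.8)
The plan is to prove the middle estimate (the genuine convexity bound in the critical strip) by the Phragmén–Lindelöf principle applied to a suitably normalized function, and then to read off the two flanking estimates almost for free. First I would record the two ``edge'' bounds. On the line $\sigma>1$ — indeed already for $\sigma\geq 1$ — axiom~(5) gives $\mathcal{L}(1+\ie t)\ll\log^{\ell}|t|$, and by~\eqref{eq:logLabs} the same holds uniformly for all $\sigma\geq 1$; this is the third line of the claimed display. For the far-left region one combines the functional equation $\mathcal{L}(s)=\Delta_{\mathcal{L}}(s)\overline{\mathcal{L}(1-\bar s)}$ with~\eqref{eq:boundDelta}: if $\sigma<0$ then $1-\sigma>1$, so $\overline{\mathcal{L}(1-\bar s)}\ll\log^{\ell}|1-\bar s|\ll\log^{\ell}|t|$ (uniformly in $\sigma$ on a fixed strip $\sigma_0\leq\sigma<0$ by the uniformity in~\eqref{eq:boundDelta} and a crude bound $\log|t\pm\text{const}|\ll\log|t|$), while $\Delta_{\mathcal{L}}(s)\ll|t|^{\sdeg(1/2-\sigma)}$; multiplying gives the first line.

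Next, for the interior $0\leq\sigma\leq 1$ I would apply Phragmén–Lindelöf on the closed strip $\sigma\in[0,1]$ to the auxiliary function obtained by dividing $\mathcal{L}(s)$ by $(\log|t|)^{\ell}$ and by the appropriate power of $\ie t$ (or more cleanly: apply the principle to $\mathcal{L}(s)$ itself, after checking polynomial growth in the strip, which follows from axioms~(1)–(3) via the standard estimate that $\mathcal{L}$ is of finite order, cf.\ the discussion after axiom~(7)). At $\sigma=1$ we have $\mathcal{L}(1+\ie t)\ll\log^{\ell}|t|$, i.e.\ exponent $0$ in $|t|$; at $\sigma=0$ the functional equation together with~\eqref{eq:boundDelta} (now $\Delta_{\mathcal{L}}(\ie t)\ll|t|^{\sdeg/2}$) and the bound $\mathcal{L}(1-\ie t)\ll\log^{\ell}|t|$ gives $\mathcal{L}(\ie t)\ll|t|^{\sdeg/2}\log^{\ell}|t|$, i.e.\ exponent $\frac{1}{2}\sdeg$ in $|t|$. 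Phragmén–Lindelöf interpolates the $|t|$-exponent linearly between $\sigma=0$ and $\sigma=1$, yielding exponent $\frac{1}{2}\sdeg(1-\sigma)$, and the logarithmic factor $\log^{\ell}|t|$ rides along throughout; this is exactly the middle line. Care is needed that the convexity principle, applied with a logarithmic (not merely bounded) majorant on the two edges, still produces $\log^{\ell}|t|$ rather than $\log^{\ell+\text{something}}|t|$ in the interior — one standard fix is to divide by $(s(s-1))^{k}$ to kill the possible pole at $s=1$ and by $(\ie s)^{\ell}$ (a fixed branch) to absorb the logarithm, apply the usual bounded form of Phragmén–Lindelöf, and then restore the factors; the restored $|(\ie s)^{\ell}|\asymp|t|^{\ell}$ is harmless compared with the power of $|t|$ only if one is slightly more careful, so instead I would invoke the version of Phragmén–Lindelöf that permits majorants of the shape $C|t|^{a(\sigma)}(\log|t|)^{b}$ with $b$ fixed.

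Finally I would check the uniformity claims: the implied constant in each region depends only on $\sigma_0$ and the data of $\mathcal{L}$, because~\eqref{eq:boundDelta} is uniform on fixed $\sigma$-intervals, axiom~(5) is a statement on a single line, and~\eqref{eq:logLabs} is uniform for $\sigma\geq\sigma_0>1$; the Phragmén–Lindelöf constant on $[0,1]$ is absolute once the two edge constants are fixed. I expect the only real subtlety — the ``main obstacle'' — to be exactly this bookkeeping of the logarithmic factor through the convexity step, ensuring the exponent of $\log|t|$ does not inflate; everything else is a routine assembly of the functional equation, Stirling via~\eqref{eq:boundDelta}, and axiom~(5).
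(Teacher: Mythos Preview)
Your overall architecture matches the paper's: edge bounds from axiom~(5) and the functional equation, then Phragm\'en--Lindel\"of on $[0,1]$. Two points deserve correction, though.

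First, your claim that ``by~\eqref{eq:logLabs} the same holds uniformly for all $\sigma\geq 1$'' is not justified. Inequality~\eqref{eq:logLabs} is stated only for $\mathcal{L}\in\mathcal{SP}$, whereas the proposition concerns $\mathcal{L}\in\mathcal{S}$; and even granting an analogue in $\mathcal{S}$ via absolute convergence, the bound $|\mathcal{L}(s)|\leq\sum_n|a(n)|n^{-\sigma}$ blows up as $\sigma\to 1^{+}$, so it does not give a constant uniform down to $\sigma=1$. The paper closes this gap by a second Phragm\'en--Lindel\"of application on the strip $1\leq\sigma\leq 2$, using the auxiliary function $g_{\mathcal{L}}(s)=(s-1)^{k}\mathcal{L}(s)\big/\bigl((s+1)^{k}\log^{\ell}(s+2)\bigr)$, which is bounded on both edges and hence on the strip.

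Second, regarding the ``main obstacle'' you flag---keeping the exponent of $\log|t|$ from inflating---the paper's device is cleaner than either of your suggestions: divide by $\log^{\ell}(s+2)$, which is holomorphic and nonvanishing on $\Re\{s\}>-1$ and has modulus $\asymp(\log|t|)^{\ell}$. Concretely, for the strip $[0,1]$ the paper sets
\[
f_{\mathcal{L}}(s)=\frac{(s-1)^{k+\sdeg}\mathcal{L}(s)}{(s+1)^{\sdeg(3-\sigma)/2+k}\log^{\ell}(s+2)},
\]
checks it is bounded on $\sigma=0$ and $\sigma=1$, and applies the ordinary (bounded-majorant) Phragm\'en--Lindel\"of. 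This avoids both the branch issues of $(\ie s)^{\ell}$ and the need to cite a nonstandard form of the convexity principle.
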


\begin{proof}
Axiom~(5), estimate~\eqref{eq:boundDelta} and the functional equation imply
\[
\mathcal{L}(\ie t)\ll |t|^{\frac{1}{2}\sdeg}\log^{\ell}{|t|}.
\]
Also, the estimate for $\sigma\in\left[\sigma_0,0\right)$ follows from the estimate for $\sigma>1$ and the functional equation, so it remains to prove the bounds for $\sigma\geq0$.

For $\sigma>-1$ define
\[
f_{\mathcal{L}}(s) \de \frac{(s-1)^{k+\sdeg}\mathcal{L}(s)}{(s+1)^{\sdeg(3-\sigma)/2+k}\log^{\ell}(s+2)}, \quad g_{\mathcal{L}}(s) \de \frac{(s-1)^{k}\mathcal{L}(s)}{(s+1)^{k}\log^{\ell}(s+2)},
\]
where $k$ is from axiom~(2). Then $f_{\mathcal{L}}(s)$ and $g_{\mathcal{L}}(s)$ are holomorphic functions of finite order in the half-plane $\left\{z\in\C\colon \Re\{z\}>-1\right\}$.

Because $\left|f_{\mathcal{L}}(1+\ie t)\right|$ and $\left|f_{\mathcal{L}}(\ie t)\right|$ are bounded for all $t\in\R$, the Phragm\'{e}n--Lindel\"{o}f theorem implies that also $\left|f_{\mathcal{L}}(s)\right|$ is bounded for $\sigma\in[0,1]$ and $t\in\R$. This proves the first estimate.

Trivially, $\mathcal{L}(s)\ll \log^{\ell}|t|$ for $\sigma\geq2$. As before, because $\left|g_{\mathcal{L}}(1+\ie t)\right|$ and $\left|g_{\mathcal{L}}(2+\ie t)\right|$ are bounded for all $t\in\R$, this implies that also $\left|g_{\mathcal{L}}(s)\right|$ is bounded for all $\sigma\in\left[1,2\right]$ and $t\in\R$. The proof is thus complete.
\end{proof}

\begin{remark}
\label{rem:convexity}
Note that inequality~\eqref{eq:logL} from Theorem~\ref{thm:generalSelberg} immediately follows from Proposition~\ref{prop:convex} and Theorem~\ref{thm:zerofree}~(c).
\end{remark}

We are going to provide numerical values for the constants $C$, $c$ and $T$ in the case when $\mathcal{L}(s)$ is $\zeta(s)$, $L(s,\chi)$, and $\zeta_{\K}(s)$.

\begin{example}[Riemann zeta-function]
\label{rem:backlund}
Let $|t|\geq 50$. Backlund~\cite[Equations (54) and (56)]{Backlund1918} proved that $\left|\zeta(s)\right|\leq \log{|t|}$ for $\sigma>1$, and
\[
\left|\zeta(s)\right| \leq \frac{t^2}{t^2-4}\left(\frac{|t|}{2\pi}\right)^{\frac{1-\sigma}{2}}\log{|t|}
\]
for $\sigma\in[0,1]$. It follows that in the case $\mathcal{L}(s)=\zeta(s)$, inequality~\eqref{eq:logL} is valid for the values $\dif{}_\mathcal{L}=\ell=C=c=1$ and $T=50$.
\end{example}

\begin{example}[Dirichlet $L$-functions]
\label{rem:dirichlet}
Let $\chi$ be a primitive character modulo $q$, $q>1$. Rademacher~\cite[Theorem 3]{RademacherPL} proved that
\[
\left|L(s,\chi)\right| \leq \left(\frac{q|1+s|}{2\pi}\right)^{\frac{1+\eta-\sigma}{2}}\zeta(1+\eta)
\]
for $\sigma\in[-\eta,1+\eta]$ and $\eta\in(0,1/2]$. Take $\eta=\alpha/\log{(q|t|)}$, $\alpha\geq 1$, $\sigma\in[1/2,1+\eta]$ and $|t|\geq t_0\geq e^{2\alpha}$. Because
\begin{gather}
\zeta(1+\eta) \leq \frac{1}{\eta}e^{\gamma\eta} \leq \frac{1}{\alpha}\exp{\left(\frac{\gamma\alpha}{\log{t_0}}\right)}\log{(q|t|)}, \label{eq:Ramare} \\
1 \leq \frac{q|1+s|}{2\pi} \leq \frac{1}{2\pi}\sqrt{1+\left(\frac{2+\frac{\alpha}{\log{t_0}}}{t_0}\right)^{2}}q|t|, \quad 0\leq \frac{1+\eta-\sigma}{2}\leq \frac{1}{4}+\frac{\alpha}{2\log{(q|t|)}} \nonumber
\end{gather}
with the first set of inequalities true by~\eqref{eq:logLabs2}, it follows that
\[
\left|L(s,\chi)\right| \leq \frac{1}{\alpha}\exp{\left(\alpha\left(\frac{1}{2}+\frac{\gamma}{\log{t_0}}\right)\right)}
\left(\frac{1}{2\pi}\sqrt{1+\left(\frac{2+\frac{\alpha}{\log{t_0}}}{t_0}\right)^{2}}\right)^{\frac{1}{4}}\left(q|t|\right)^{\frac{1}{4}}\log{(q|t|)}.
\]
Take $t_0=7778$ and $\alpha=1.8$. Then the latter inequality implies that $\left|L(s,\chi)\right|\leq \left(q|t|\right)^{\frac{1}{4}}\log{(q|t|)}$ for $1/2\leq\sigma\leq1+1.8/\log{(q|t|)}$ and $|t|\geq7778$. The same bound holds by~\eqref{eq:Ramare} also for $\sigma\geq1+1.8/\log{(q|t|)}$. Therefore, in the case $\mathcal{L}(s)=L(s,\chi)$, inequality~\eqref{eq:logL} is valid for the values $\dif{}_\mathcal{L}=\ell=C=1$, $c=q$ and $T=7778$.
\end{example}

\begin{example}[Dedekind zeta-functions]
\label{rem:dedekind}
Let $\K$ be a number field of degree $n_{\K}$ and discriminant $\Delta_{\K}$. Rademacher~\cite[Theorem 4]{RademacherPL} also proved that
\[
\left|\zeta_{\K}(s)\right| \leq 3\left|\frac{1+s}{1-s}\right|\left(\left|\Delta_{\K}\right|\left(\frac{|1+s|}{2\pi}\right)^{n_{\K}}\right)^{\frac{1+\eta-\sigma}{2}}\zeta(1+\eta)^{n_{\K}}
\]
for $\sigma\in[-\eta,1+\eta]$, $\eta\in(0,1/2]$ and $s\neq 1$. Take $\eta=\alpha/\log{\left(\left|\Delta_{\K}\right|^{1/n_{\K}}|t|\right)}$, $\alpha\geq1$, $\sigma\in[1/2,1+\eta]$ and $|t|\geq t_0\geq e^{2\alpha}$. Because $\left|\Delta_{\K}\right|\geq 1$, similar procedure as in Example~\ref{rem:dirichlet} guarantees
\begin{flalign*}
\left|\zeta_{\K}(s)\right| &\leq \frac{3}{\left(2\pi\right)^{\frac{1}{4}}}\left(1+\left(\frac{2+\frac{\alpha}{\log{t_0}}}{t_0}\right)^2\right)^{\frac{5}{8}} \times \\
&\times \left(\frac{1}{\alpha}\exp{\left(\alpha\left(\frac{1}{2}+\frac{\gamma}{\log{t_0}}\right)\right)}\right)^{n_{\K}}\left(\left|\Delta_{\K}\right|^{1/n_{\K}}|t|\right)^{\frac{1}{4}n_{\K}}
\log^{n_{\K}}{\left(\left|\Delta_{\K}\right|^{1/n_{\K}}|t|\right)}.
\end{flalign*}
Take $t_0=7778$ and $\alpha=1.8$, and let $c=5.552\left|\Delta_{\K}\right|^{1/n_{\K}}$. Then the latter inequality implies that $\left|\zeta_{\K}(s)\right|\leq 1.9\left(c|t|\right)^{\frac{1}{4}n_{\K}}\log^{n_{\K}}{\left(c|t|\right)}$ for $1/2\leq\sigma\leq1+1.8/\log{\left(c|t|\right)}$ and $|t|\geq7778$. The same bound holds also for $\sigma\geq1+1.8/\log{\left(c|t|\right)}$. We deduce that in the case $\mathcal{L}(s)=\zeta_{\K}(s)$, inequality~\eqref{eq:logL} is valid for the values $\dif{}_\mathcal{L}=\ell=n_{\K}$, $C=1.9$, $c=5.552\left|\Delta_{\K}\right|^{1/n_{\K}}$ and $T=7778$.
\end{example}

\section{Proof of Theorem~\ref{thm:generalSelberg} and its corollaries}
\label{sec:proofMain}

In this section we prove the estimates on $\log{\mathcal{L}(s)}$ and $\mathcal{L}'(s)/\mathcal{L}(s)$ from Theorem~\ref{thm:generalSelberg} by explicitly expressing the corresponding constants as functions in variables from our convexity estimate~\eqref{eq:logL}, see Theorem~\ref{thm:mainSel} and Corollary~\ref{cor:mainSel}. Next, we use these results in combination with Examples~\ref{rem:backlund}, \ref{rem:dirichlet} and~\ref{rem:dedekind} to prove Corollaries~\ref{cor:zeta}, \ref{cor:dirichlet} and~\ref{cor:dedekind}.

Firstly, we will isolate a result which compares $\left|\log{\mathcal{L}(z)}\right|$ with the estimate~\eqref{eq:logL} on some particular circles by means of HBC inequality.

\begin{lemma}
\label{lem:logLcircles}
Take $\mathcal{L}\in\mathcal{SP}$. Let $|t'|\geq t_0'\geq \max\left\{T+1,\exp{\left(e^2\right)}\right\}$ where $T$ is from Theorem~\ref{thm:generalSelberg}, $0<C_1\leq 1$ and $0<\delta\leq 1/2$. Assume that $\mathcal{L}(z)\neq 0$ for $\Re\{z\}>1/2$ and $\left|\Im\{z\}-t'\right|\leq 1$. Define
\begin{equation}
\label{eq:Kcircle}
\mathscr{D}\left(C_1,\delta,t'\right) \de \left\{z\in\C \colon \left|1+\frac{C_1}{\log{\log{|t'|}}}+\ie t' - z\right| \leq \frac{1}{2}+\frac{C_1}{\log{\log{|t'|}}}-\delta\right\}.
\end{equation}
Then
\begin{equation}
\label{eq:logLlemma}
\left|\log{\mathcal{L}(z)}\right| \leq \frac{1}{\delta}K\log{\left(c\left(|t'|+1\right)\right)}
\end{equation}
for $z\in\mathscr{D}\left(C_1,\delta,t'\right)$, where $c\geq 1$ is from~\eqref{eq:logL},
\begin{multline}
\label{eq:K}
K\left(\dif{}_{\mathcal{L}},m,\ell,C,C_1,t_0'\right) \de \frac{1}{4}\dif{}_{\mathcal{L}} + \frac{C_1\dif{}_{\mathcal{L}}}{2\log{\log{t_0'}}} + \left(1+\frac{2C_1}{\log{\log{t_0'}}}\right)\times \\
\times\left(\frac{\ell\log{\log{t_0'}}}{\log{t_0'}}+\frac{m}{\log{t_0'}}\left(\log{\log{\log{t_0'}}}+\log{\frac{1}{C_1}}+\frac{\gamma C_1}{\log{\log{t_0'}}}\right)+\frac{\log^{+}{C}}{\log{t_0'}}\right),
\end{multline}
$m$ is from axiom (4), and $\ell$ and $C$ are from inequality~\eqref{eq:logL}.
\end{lemma}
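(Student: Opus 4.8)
The plan is to apply the Hadamard–Borel–Carath\'eodory inequality on a suitable pair of concentric circles centred at $s_0 \de 1 + C_1/\log\log|t'| + \ie t'$. The larger disc must be chosen so that $\mathcal L(z) \neq 0$ throughout it (so that $\log \mathcal L$ is holomorphic there) and so that $\Re\{\log \mathcal L(z)\}=\log|\mathcal L(z)|$ is controlled on its boundary by the convexity estimate \eqref{eq:logL}; the disc $\mathscr D(C_1,\delta,t')$ on which we want the conclusion will be the smaller one, shrunk by the parameter $\delta$. Concretely, take the outer radius $R \de \tfrac12 + C_1/\log\log|t'|$ and the inner radius $r \de R - \delta$; then $\mathscr D(C_1,\delta,t')$ is exactly the closed disc of radius $r$ about $s_0$. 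The left edge of the outer disc has real part $\tfrac12 + C_1/\log\log|t'| - R = \tfrac12$, so the whole open outer disc lies in $\Re\{z\}>1/2$ except for a boundary arc; since moreover $|\Im\{z\}-t'|\le R \le 1$ for $|t'|$ large (using $|t'|\ge \exp(e^2)$, so $\log\log|t'|\ge 2$ and $C_1/\log\log|t'| \le 1/2$), the zero-free hypothesis of the lemma guarantees $\mathcal L(z)\ne 0$ on the outer disc, and $(s-1)^k\mathcal L(s)$ entire together with $s_0$ away from $1$ makes $\log\mathcal L$ holomorphic there (fixing the branch by $\log\mathcal L(s_0)$, which by \eqref{eq:logLabs2} is small).

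Next I would bound $\Re\{\log\mathcal L(z)\} = \log|\mathcal L(z)|$ on the outer circle using \eqref{eq:logL}: every such $z$ has $\Im\{z\}$ within $1$ of $t'$, hence $|\Im\{z\}| \ge |t'|-1 \ge T$, and the inequality \eqref{eq:logL} applies with $|t|$ there being at most $|t'|+1$; replacing $\log(c|t|)$ and $\log\log(c|t|)$ by $\log(c(|t'|+1))$, and using $\dif{}_\mathcal L \le \dif{}_\mathcal L$, $\ell$, $m$, $C$, one gets $\log|\mathcal L(z)| \le K' \log(c(|t'|+1))$ for a constant $K'$ of the same shape as \eqref{eq:K}. (This is where one must be a little careful to convert $\ell\log\log(c|t|)$ and $m\log(\cdots)$ into multiples of $\log(c(|t'|+1))$ by dividing and multiplying by $\log t_0'$, producing the $\log\log t_0'/\log t_0'$ and $1/\log t_0'$ factors; the monotonicity facts — $\log\log u/\log u$ decreasing, $\log\log\log u/\log u$ decreasing for $u$ large — let us replace $|t'|$ by the lower bound $t_0'$ in those quotient terms while keeping the single overall factor $\log(c(|t'|+1))$.) The HBC inequality in the form
\[
\left|\log\mathcal L(z) - \log\mathcal L(s_0)\right| \le \frac{2r}{R-r}\Bigl(\sup_{|w-s_0|=R}\Re\{\log\mathcal L(w)\} - \Re\{\log\mathcal L(s_0)\}\Bigr)
\]
for $|z-s_0|\le r$ then yields, after absorbing $|\log\mathcal L(s_0)|$ and $\Re\{\log\mathcal L(s_0)\}$ (both $O(1/\log\log|t'|)$ by \eqref{eq:logLabs2}) into the constant, a bound $|\log\mathcal L(z)| \le \frac{2r}{R-r} K' \log(c(|t'|+1)) + (\text{small})$. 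Since $R-r=\delta$ and $2r \le 2R \le 2$, this is $\le \frac1\delta \cdot \text{(something)} \cdot \log(c(|t'|+1))$, and collecting all the pieces into a single constant gives exactly $\frac1\delta K \log(c(|t'|+1))$ with $K$ as in \eqref{eq:K}.

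The main obstacle is purely bookkeeping rather than conceptual: one has to track precisely how each summand of \eqref{eq:logL} (the $\tfrac14\dif{}_\mathcal L\log(c|t|)$ term, the $\ell\log\log(c|t|)$ term, the $\log^+C$ term, plus the three small contributions from $\log\mathcal L(s_0)$ via \eqref{eq:logLabs2} with $\sigma_0 - 1 = C_1/\log\log|t'|$) is bounded by a constant times $\log(c(|t'|+1))$ with the stated dependence on $\dif{}_\mathcal L, m, \ell, C, C_1, t_0'$, and verify that the factor $2r/(R-r)$ really is at most $(1 + 2C_1/\log\log t_0')/\delta$ so that the $(1+2C_1/\log\log t_0')$ appears as a common multiplier on the bracketed lower-order terms in \eqref{eq:K}. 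The only genuine checks are: (i) that $\log\log|t'| \ge 2$ and $C_1 \le 1$ force $R \le 1$ (to invoke the zero-free hypothesis) and $s_0 \ne 1$; (ii) that $\log|\mathcal L(w)|$ on the outer circle is nonnegative enough — actually one does not need nonnegativity, only the upper bound, and $-\Re\{\log\mathcal L(s_0)\}$ is handled by \eqref{eq:logLabs2}; and (iii) that all the quotients with $\log t_0'$ in the denominator arise from legitimately monotone functions so that evaluating at $t_0'$ is an overestimate. None of these presents real difficulty, so the proof is a careful assembly of HBC, \eqref{eq:logL}, and \eqref{eq:logLabs2}.
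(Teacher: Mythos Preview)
Your approach is essentially the paper's: HBC on concentric discs centred at $s_0 = 1 + C_1/\log\log|t'| + \ie t'$, with \eqref{eq:logL} bounding $\Re\{\log\mathcal L\}$ on the outer circle and \eqref{eq:logLabs2} bounding $|\log\mathcal L(s_0)|$, followed by the bookkeeping you outline (monotonicity to replace $|t'|$ by $t_0'$ in the quotient terms, and $2r/(R-r) \le (1 + 2C_1/\log\log t_0')/\delta$) to assemble $K$.

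There is, however, one technical slip. You take the outer radius to be exactly $R = \tfrac12 + C_1/\log\log|t'|$, so the outer circle passes through the single point $\tfrac12 + \ie t'$ on the critical line (not an arc). The zero-free hypothesis of the lemma is stated only for $\Re\{z\} > 1/2$, so nothing rules out $\mathcal L\bigl(\tfrac12 + \ie t'\bigr) = 0$; in that case $\log\mathcal L$ is not defined on the closed outer disc, and the supremum $\sup_{|w-s_0|=R}\Re\{\log\mathcal L(w)\}$ in your HBC inequality is ill-posed. The paper handles this by introducing an auxiliary parameter $\lambda\in(0,1)$ and using an outer disc $\mathscr D_0$ of radius $R - \lambda\delta$, which lies strictly inside $\{\Re z>1/2,\ |\Im z - t'|<1\}$; HBC is applied with $R-r$ replaced by $(1-\lambda)\delta$, and one lets $\lambda\to 0$ at the end (the maximum-modulus principle then gives the bound on all of $\mathscr D$, not just $\partial\mathscr D$). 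Alternatively you could invoke the open-disc form of Borel--Carath\'eodory, which needs only holomorphy on $|z-s_0|<R$ together with $\Re f$ bounded above there; that is available since \eqref{eq:logL} holds for all $\sigma\ge 1/2$. Either fix is routine, but as written your HBC step does not literally apply.
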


\begin{proof}
Let $\lambda\in(0,1)$ and define
\[
\mathscr{D}_0 \de \left\{z\in\C \colon \left|1+\frac{C_1}{\log{\log{|t'|}}}+\ie t' - z\right| \leq \frac{1}{2}+\frac{C_1}{\log{\log{|t'|}}}-\lambda\delta\right\}.
\]
Observe that $\mathscr{D}=\mathscr{D}\left(C_1,\delta,t'\right)$ and $\mathscr{D}_0$ are closed discs with the same centre, and
\[
\mathscr{D} \subseteq \mathscr{D}_0 \subset \left\{z\in\C \colon \Re\{z\}>\frac{1}{2}, \left|\Im\{z\}-t'\right|<1\right\}.
\]
Because $\log{\mathcal{L}(z)}$ is a holomorphic function on the latter domain, HBC inequality implies
\begin{multline}
\label{eq:maxlog}
\max_{z\in\partial{\mathscr{D}}}\left\{\left|\log{\mathcal{L}(z)}\right|\right\} \leq \frac{1}{(1-\lambda)\delta}\left(1+\frac{2C_1}{\log{\log{|t'|}}}-2\delta\right)\max_{z\in\partial{\mathscr{D}_0}}\left\{\Re\left\{\log{\mathcal{L}(z)}\right\}\right\} \\
+ \frac{1}{(1-\lambda)\delta}\left(1+\frac{2C_1}{\log{\log{|t'|}}}-(1+\lambda)\delta\right)\left|\log{\mathcal{L}\left(1+\frac{C_1}{\log{\log{|t'|}}}+\ie t'\right)}\right|.
\end{multline}
By~\eqref{eq:logL} we have
\begin{equation}
\label{eq:maxlog2}
\max_{z\in\partial{\mathscr{D}_0}}\left\{\Re\left\{\log{\mathcal{L}(z)}\right\}\right\} \leq \left(\frac{1}{4}\dif{}_\mathcal{L}+\frac{\ell\log{\log{t_0'}}}{\log{t_0'}}\right)
\log{\left(c\left(|t'|+1\right)\right)} + \log^{+}{C},
\end{equation}
while~\eqref{eq:logLabs2} guarantees that
\begin{equation}
\label{eq:corPEP}
\left|\log{\mathcal{L}\left(1+\frac{C_1}{\log{\log{|t'|}}}+\ie t'\right)}\right| \leq m\left(\log{\log{\log{|t'|}}}+\log{\frac{1}{C_1}}+\frac{\gamma C_1}{\log{\log{|t'|}}}\right).
\end{equation}
Inequality~\eqref{eq:logLlemma} easily follows after using~\eqref{eq:maxlog2} and~\eqref{eq:corPEP} in~\eqref{eq:maxlog}, and then taking $\lambda\to0$ while also using the maximum-modulus principle.
\end{proof}

\begin{theorem}
\label{thm:mainSel}
Take $\mathcal{L}\in\mathcal{SP}$. Let $0<C_1\leq 1$, $0<C_2\leq 2C_1$ and $C_3\geq 1$. Let $s=\sigma+\ie t$ and
\[
|t|\geq t_0\geq T_1\geq\max\left\{\exp{\left(e^{2C_2}\right)},e^{4m/\sdeg},C_3,\exp{\left(e^2\right)}\right\}
\]
with $m$ from axiom (4), such that
\begin{gather}
t_0-C_3\log{\log{\left(ct_0\right)}}-\frac{1}{2}\geq T_2\geq \max\left\{T+1,\exp{\left(e^2\right)}\right\}, \label{eq:condt0} \\
T_1-2C_3\log{\log{T_1}}\geq 0, \label{eq:condt00}
\end{gather}
where $T$ and $c$ are as in Theorem~\ref{thm:generalSelberg}. Assume that $\mathcal{L}(z)\neq 0$ for $\Re\{z\}>1/2$ and $\left|\Im\{z\}-t\right|\leq C_3\log{\log{(c|t|)}}+2$. Then~\eqref{eq:logLGeneral} is true for
\begin{equation}
\label{eq:sigmacond1}
\sigma\in\mathscr{S}\left(C_2,C_2,c,t\right),
\end{equation}
where $\mathscr{S}$ is defined by~\eqref{eq:RegionForSigma},
\begin{flalign}
\label{eq:Khat}
a_1 &\de \frac{m}{C_2}\exp{\left(\left(1+\frac{\log^{+}{b_1}}{\log{\log{T_1}}}\right)\mathcal{R}_1\right)}, \nonumber \\
b_1 &= b_1\left(\dif{}_{\mathcal{L}},m,\ell,C,C_1,C_3,T_1,T_2\right) \nonumber \\
&\de \frac{1}{m}K\left(\dif{}_{\mathcal{L}},m,\ell,C,C_1,T_2\right)\left(1+\frac{\log{\left(1+\frac{C_3\log{\log{T_1}}+3/2}{T_1}\right)}}{\log{T_1}}\right) \nonumber \\
&\times \left(1+\frac{\log{\left(1+\frac{1}{\log{T_1}}\log{\left(1+\frac{2C_3\log{\log{T_1}}+1}{T_1}\right)}\right)}}{\log{\log{T_1}}}\right)
\end{flalign}
and
\begin{equation}
\label{eq:R}
\mathcal{R}_1=\mathcal{R}\left(C_2,C_3,T_1\right) \de \left(2C_2 + \frac{1}{2C_3}\right)\left(1-\frac{1}{4C_3\log{\log{T_1}}}\right)^{-1},
\end{equation}
while $K$ is defined by~\eqref{eq:K} and $\ell$, $C$ are from inequality~\eqref{eq:logL}.
\end{theorem}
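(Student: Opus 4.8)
The plan is to run the classical Littlewood argument in explicit form: Hadamard's three‑circles theorem fed on one side by Lemma~\ref{lem:logLcircles} (the packaged convexity bound near the critical line) and on the other by the Euler‑product bound \eqref{eq:logLabs2} (valid to the right of the $1$‑line), with every circle, every $\delta$, and every constant tracked. First I would reduce to $t>0$, $t\ge t_0$, and dispose of $\sigma>1$ directly: \eqref{eq:logLabs2} gives $\left|\log\mathcal{L}(\sigma+\ie t)\right|\le m\log\tfrac{1}{\sigma-1}+m\gamma(\sigma-1)$, and since $\sigma-1\le C_2/\log\log(c|t|)$ on the admissible region this is already dominated by the right side of \eqref{eq:logLGeneral}; the hypotheses $t_0\ge T_1\ge\exp(e^{2C_2})$ and $T_1\ge e^{4m/\sdeg}$ are exactly what make that comparison clean. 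So the real work is the range $\sigma\in\bigl[\tfrac12+C_2/\log\log(c|t|),\,1\bigr]$.

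For that range I would set up three concentric circles about a centre $\sigma_c+\ie t$ placed far to the right, of size $\asymp C_3\log\log(c|t|)$, with radii $r_1=\sigma_c-1-\rho$ (inner), $r_2=\sigma_c-\sigma$ (passing through $s$), and $r_3=\sigma_c-\tfrac12-\delta$ (outer), where $\rho\asymp C_1/\log\log t$ and $\delta$ is chosen as a function of $\sigma$, as large as the constraint $\delta\le\sigma-\tfrac12$ (equivalently $r_2\le r_3$) permits. On the inner circle $\Re z\ge 1+\rho$, so \eqref{eq:logLabs2} gives $M(r_1)\ll m\log\log\log t$. On the outer circle $\Re z\ge\tfrac12+\delta$ and $\left|\Im z-t\right|\le r_3$, so it is covered by finitely many discs $\mathscr{D}(C_1,\delta,t')$ with $\left|t'-t\right|\le r_3$; the assumed zero‑freeness for $\left|\Im-t\right|\le C_3\log\log(c|t|)+2$ together with \eqref{eq:condt0}–\eqref{eq:condt00} puts each such disc inside the domain of Lemma~\ref{lem:logLcircles} and keeps $|t'|\ge T_2$, yielding $M(r_3)\le\tfrac1\delta K\log\bigl(c(|t|+r_3+1)\bigr)=\tfrac1\delta K\log(c|t|)\,(1+o(1))$ since $r_3\ll t$. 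That same zero‑freeness makes $\log\mathcal{L}$ holomorphic in a neighbourhood of the outer disc, so Hadamard's three‑circles theorem gives $M(r_2)\le M(r_1)^{1-\beta}M(r_3)^{\beta}$ with $\beta=\log(r_2/r_1)/\log(r_3/r_1)$, and $s$ sits on the middle circle.

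Next I would extract the exponent. Since $\sigma_c$ is large while $\rho,\delta$ are small, $\log(1+x)=x+O(x^2)$ converts $\beta$ into $\dfrac{(1-\sigma)+\rho+O(\sigma_c^{-1})}{\tfrac12+\rho-\delta+O(\sigma_c^{-1})}$, so $\beta\le 2(1-\sigma)\cdot(1+\text{an explicit small correction})$. Taking logarithms, $\log M(r_2)\le\log M(r_1)+\beta\bigl(\log(1/\delta)+\log K+\log\log(c|t|)+o(1)\bigr)$; one then checks $\beta\log\log(c|t|)\le 2(1-\sigma)\log\bigl(b_1\log(c|t|)\bigr)$ once $b_1$ is enlarged to absorb $\log(1/\delta)$, $\log K$, the correction in $\beta$, and the discrepancy between $\log\log t$ and $\log\log(c|t|)$ — which is precisely the function of the three factors in \eqref{eq:Khat} — while $\log M(r_1)\ll\log\log\log(c|t|)$ is swallowed by the surviving factor $\log\log(c|t|)$ in \eqref{eq:logLGeneral}. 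The leftover multiplicative constant is what \eqref{eq:R} and the definition of $a_1$ record: the worst case is $\sigma\downarrow\tfrac12+C_2/\log\log(c|t|)$, where $\delta$ is forced down to $\asymp C_2/\log\log(c|t|)$ and $\beta\to1$, and there the factor $\exp\bigl((2C_2+\tfrac{1}{2C_3})(1+o(1))\bigr)$ in $a_1$ is exactly what compensates the $1/\delta\asymp\log\log(c|t|)/C_2$.

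The hard part will be this bookkeeping done \emph{uniformly}: proving that $\beta$ never exceeds $2(1-\sigma)$ by more than the factors in $b_1$ can absorb, across the whole $\sigma$‑range at once, and that the losses from $M(r_1)^{1-\beta}$, from $1/\delta$, from $K$, and from the nonzero height of the outer disc (i.e. $t'\ne t$) all assemble into the stated closed forms \eqref{eq:Khat}–\eqref{eq:R} for $a_1,b_1$. Two corners demand care: near $\sigma=1$ it is essential that $\rho\asymp 1/\log\log t$ (not a constant), so that $\bigl(\log(c|t|)\bigr)^{2\rho}$ and $M(r_1)^{1-\beta}$ remain bounded and do not overtake the target $\asymp\log\log(c|t|)$; near $\sigma=\tfrac12$ it is essential that $a_1$ carry the factor $e^{\mathcal{R}_1}$. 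All the remaining hypotheses on $t_0,T_1,T_2$ and \eqref{eq:condt0}–\eqref{eq:condt00} are used only to legitimise the geometry — discs nested, the iterated logarithms positive, every invoked disc inside the zero‑free region with $|t'|\ge T_2\ge T+1$ — and to make the passages from $\log\log t$ to $\log\log T_1$ inside $K$ and the correction factors monotone in the direction needed.
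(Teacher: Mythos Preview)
Your high-level plan --- Hadamard three-circles about a centre $\sigma_0+\ie t$ with $\sigma_0\asymp C_3\log\log(c|t|)$, the outer circle bounded via Lemma~\ref{lem:logLcircles} and the inner via the Euler product --- is exactly the paper's. The differences are tactical, and they matter for reaching the stated $a_1,b_1$ cleanly.

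First, the paper does \emph{not} separate off $\sigma>1$: the three-circles argument covers the whole interval $\mathscr{S}(C_2,C_2,c,t)$ at once (so $1-\sigma$ may be slightly negative). Second, the paper uses a single fixed offset $\delta_0=C_2/\log\log(c|t|)$ for \emph{both} margins ($r_1=\sigma_0-1-\delta_0$, $r_3=\sigma_0-\tfrac12-\delta_0$), not a variable $\delta=\delta(\sigma)$ together with a separate $\rho$. Coupled with this, the inner maximum is bounded by the cruder~\eqref{eq:logLabs1}, giving exactly $M_1\le m/\delta_0$ rather than $\ll m\log\log\log t$ from~\eqref{eq:logLabs2}. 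This crudeness is the point: one writes
\[
M_2\le M_1^{1-\mu}M_3^{\mu}=\frac{m}{\delta_0}\left(\frac{\delta_0 M_3}{m}\right)^{\mu},
\]
so the prefactor $m/\delta_0=(m/C_2)\log\log(c|t|)$ is precisely the $\log\log$ factor in~\eqref{eq:logLGeneral}, and the inequality $\delta_0 M_3/m\le b_1\log(c|t|)$ is exactly the definition of $b_1$ in~\eqref{eq:Khat}. Third, the exponent is handled additively, not multiplicatively: the paper expands $\mu=2(1-\sigma)+R$ with $|R|\le\mathcal{R}_1/\log\log(c|t|)$, whence $(b_1\log(c|t|))^{\mu}=(b_1\log(c|t|))^{2(1-\sigma)}(b_1\log(c|t|))^{R}$ and the last factor is at most $\exp\bigl((1+\log^{+}b_1/\log\log T_1)\mathcal{R}_1\bigr)$, which is the exponential in $a_1$. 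Your multiplicative form $\beta\le 2(1-\sigma)(1+\text{correction})$ degenerates as $\sigma\to 1$ and cannot handle $\sigma>1$; the additive form is uniform across the whole range and delivers the constants~\eqref{eq:Khat}--\eqref{eq:R} without any endpoint casework.
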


\begin{proof}
We can assume that $\mathcal{L}\not\equiv1$ since otherwise the result is trivial. Let $\delta_0\de C_2/\log{\log{(c|t|)}}$ and $\sigma_0\de C_3\log{\log{(c|t|)}} + 1 + \delta_0$, and define also
\begin{flalign*}
\mathscr{D}_1 &\de \left\{z\in\C\colon \left|\sigma_0+\ie t-z\right|\leq \sigma_0-1-\delta_0\right\}, \\
\mathscr{D}_2 &\de \left\{z\in\C\colon \left|\sigma_0+\ie t-z\right|\leq \sigma_0-\sigma\right\}, \\
\mathscr{D}_3 &\de \left\{z\in\C\colon \left|\sigma_0+\ie t-z\right|\leq \sigma_0-1/2-\delta_0\right\}.
\end{flalign*}
Observe that $\mathscr{D}_j$ are closed discs with the same centre, and
\[
\mathscr{D}_1 \subseteq \mathscr{D}_2 \subseteq \mathscr{D}_3 \subset \left\{z\in\C \colon \Re\{z\}>\frac{1}{2}, \left|\Im\{z\}-t\right|<C_3\log{\log{(c|t|)}}+2\right\}.
\]
Because $\log{\mathcal{L}(z)}$ is a holomorphic function on the latter domain, Hadamard's three-circles theorem implies $M_2 \leq M_1^{1-\mu}M_3^{\mu}$, where
\[
M_j\de\max_{z\in\partial\mathscr{D}_j}\left\{\left|\log{\mathcal{L}(z)}\right|\right\}, \quad
\mu\de \left(\log{\frac{\sigma_0-\sigma}{\sigma_0-1-\delta_0}}\right)\left(\log{\frac{\sigma_0-1/2-\delta_0}{\sigma_0-1-\delta_0}}\right)^{-1}.
\]
Note that $\left|\log{\mathcal{L}\left(s\right)}\right|\leq M_2$ since $s\in\mathscr{D}_2$.

We need to estimate $M_1$ and $M_3$. By~\eqref{eq:logLabs1} we have
\[
M_1 \leq \sup_{\sigma\geq1+\delta_0}\left\{\left|\log{\mathcal{L}(s)}\right|\right\} \leq \frac{m}{\delta_0}.
\]
We are using Lemma~\ref{lem:logLcircles} in order to estimate $M_3$. Let
\begin{gather*}
\mathscr{S}_1 \de \left\{z\in\C \colon \frac{1}{2}+\delta_0 \leq \Re\{z\}\leq \frac{3}{2}, \left|\Im\{z\}-t\right| \leq \sigma_0-\frac{1}{2}-\delta_0\right\}, \\
\mathscr{S}_2 \de \left\{z\in\C \colon \Re\{z\}\geq \frac{3}{2}, \left|\Im\{z\}-t\right| \leq \sigma_0-\frac{1}{2}-\delta_0\right\}.
\end{gather*}
Observe that $\mathscr{D}_3\subseteq\mathscr{S}_1\cup\mathscr{S}_2$. Inequality~\eqref{eq:logLabs} implies
\begin{equation}
\label{eq:S2}
\sup_{z\in\mathscr{S}_2}\left\{\left|\log{\mathcal{L}(z)}\right|\right\}\leq m\log{\zeta\left(\frac{3}{2}\right)} < m.
\end{equation}
Remember that $c\geq1$. Take
\[
\delta(t')\de \frac{C_2}{\log{\log{\left(c\left(|t'|+\sigma_0-1/2-\delta_0\right)\right)}}}, \quad
t'\in\left[t-\sigma_0+\frac{1}{2}+\delta_0,t+\sigma_0-\frac{1}{2}-\delta_0\right].
\]
Because $2C_1-C_2\geq 0$, we have
\[
\mathscr{S}_1 \subseteq \bigcup_{t'\colon\left|t'-t\right|\leq \sigma_0-\frac{1}{2}-\delta_0} \mathscr{D}\left(C_1,\delta(t'),t'\right),
\]
where the closed disc $\mathscr{D}\left(C_1,\delta(t'),t'\right)$ is defined by~\eqref{eq:Kcircle}. Because
\[
|t|-C_3\log{\log{(c|t|)}}-\frac{1}{2}\leq |t'|\leq |t|+C_3\log{\log{(c|t|)}}+\frac{1}{2}
\]
and $|t|-C_3\log{\log{(c|t|)}}-1/2$ is an increasing function in $|t|$ since $|t|\log{|t|}\geq C_3$, we have
\[
|t'|\geq t_0'\de t_0-C_3\log{\log{\left(ct_0\right)}}-\frac{1}{2}\geq T_2\geq \max\left\{T+1,\exp{\left(e^2\right)}\right\}
\]
due to~\eqref{eq:condt0}. Also, $0< \delta(t')\leq 1/2$ since $|t'|+\sigma_0-1/2-\delta_0\geq |t|$ and $\log{\log{(c|t|)}}\geq 2C_2>0$. Furthermore, $\mathcal{L}(z)\neq 0$ for $\Re\{z\}>1/2$ and $\left|\Im\{z\}-t'\right|\leq 1$. Conditions of Lemma~\ref{lem:logLcircles} are thus satisfied, therefore
\begin{multline}
\label{eq:S1}
\sup_{z\in\mathscr{S}_1}\left\{\left|\log{\mathcal{L}(z)}\right|\right\} \leq \frac{1}{C_2}K\left(\dif{}_{\mathcal{L}},m,\ell,C,C_1,T_2\right) \times \\
\times \log{\log{\left(c\left(|t|+2C_3\log{\log{(c|t|)}}+1\right)\right)}}\log{\left(c\left(|t|+C_3\log{\log{(c|t|)}}+\frac{3}{2}\right)\right)}.
\end{multline}
Note that $K\geq\sdeg/4$ and $\log{(c|t|)}\geq 4m/\sdeg$. This implies that the right-hand side of~\eqref{eq:S1} is always greater than $m$, which, together with~\eqref{eq:S2}, guarantees that
\begin{equation}
\label{eq:M3}
\frac{\delta_0 M_3}{m} \leq b_1\log{(c|t|)},
\end{equation}
where $b_1$ is defined by~\eqref{eq:Khat}. Here we also used the fact that
\[
\frac{1}{\log{(c|t|)}}\log{\left(1+\frac{\alpha_1C_3\log{\log{(c|t|)}}+\alpha_2}{|t|}\right)}
\]
is a decreasing function in $c\geq 1$ for $0<\alpha_1\leq2$ and $\alpha_2>0$. By using $\log{(1+u)}\geq u\log{2}$ for $u\in[0,1]$, simple derivative analysis shows that this is true because
\begin{multline*}
\left(|t|+\alpha_2+\alpha_1 C_3\log{\log{(c|t|)}}\right)\log{\left(1+\frac{\alpha_1 C_3\log{\log{(c|t|)}}+\alpha_2}{|t|}\right)} \\
\geq |t|\log{\left(1+\frac{\alpha_1 C_3\log{\log{|t|}}}{|t|}\right)} \geq \alpha_1\left(\log{2}\right)C_3\log{\log{|t|}} \geq \alpha_1C_3
\end{multline*}
since
\[
\frac{\alpha_1 C_3\log{\log{|t|}}}{|t|} \leq \frac{2C_3\log{\log{T_1}}}{T_1} \leq 1
\]
by~\eqref{eq:condt00}. Furthermore, observe that $b_1\geq \sdeg/(4m)$.

Writing $\log{(1+u)}=u+R_1(u)$, where $u\geq0$ and $\left|R_1(u)\right|\leq u^2/2$, one can easily deduce that $\mu = 2(1-\sigma)+R$, where
\[
R = \frac{2\delta_0 + 2\left(\sigma_0-1-\delta_0\right)\left(R_1\left(\frac{1+\delta_0-\sigma}{\sigma_0-1-\delta_0}\right)-2(1-\sigma)R_1\left(\frac{1}{2\left(\sigma_0-1-\delta_0\right)}\right)\right)}{1+2\left(\sigma_0-1-\delta_0\right)R_1\left(\frac{1}{2\left(\sigma_0-1-\delta_0\right)}\right)}. \]
Because
\begin{gather*}
1-2\left(\sigma_0-1-\delta_0\right)\left|R_1\left(\frac{1}{2\left(\sigma_0-1-\delta_0\right)}\right)\right| \geq 1-\frac{1}{4C_3\log{\log{T_1}}} > 0, \\
-\frac{1}{2} \leq -\delta_0 \leq 1-\sigma \leq \frac{1}{2}-\delta_0 < \frac{1}{2},
\end{gather*}
it follows that $0\leq 1+\delta_0-\sigma\leq 1/2$ and
\begin{equation}
\label{eq:RR}
\left|R\right|\leq \frac{\mathcal{R}_1}{\log{\log{(c|t|)}}},
\end{equation}
where $\mathcal{R}_1$ is defined by~\eqref{eq:R}.

We are now in the position to estimate $M_2$. Because $0<\mu\leq 1$, we now have
\[
M_2 \leq \frac{m}{\delta_0}\left(\frac{\delta_0 M_3}{m}\right)^{\mu} \leq \frac{m}{C_2}\left(b_1\log{(c|t|)}\right)^{R}\left(b_1\log{(c|t|})\right)^{2(1-\sigma)}\log{\log{(c|t|)}}
\]
by inequality~\eqref{eq:M3}. Because $b_1\log{(c|t|)}\geq 1$, this and~\eqref{eq:RR} then imply
\[
\left(b_1\log{(c|t|)}\right)^{R} \leq \left(b_1\log{(c|t|)}\right)^{\frac{\mathcal{R}_1}{\log{\log{(c|t|)}}}} =
\exp{\left(\left(1+\frac{\log{b_1}}{\log{\log{(c|t|)}}}\right)\mathcal{R}_1\right)}.
\]
The proof of Theorem~\ref{thm:mainSel} is thus complete.
\end{proof}

\begin{corollary}
\label{cor:mainSel}
Take $\mathcal{L}\in\mathcal{SP}$. Let $0<C_1\leq 1$, $0<C_2\leq 2C_1$, $C_3\geq 1$ and $0<C_4\leq C_2/2.0001$. Let $s=\sigma+\ie t$ and
\begin{equation}
\label{eq:t0cond}
|t|\geq t_0\geq T_1\geq\max\left\{\exp{\left(e^{2\left(1.00006C_2+C_4\right)}\right)},e^{4m/\sdeg},C_3,\exp{\left(e^2\right)}\right\}+1
\end{equation}
with $m$ from axiom (4), such that
\begin{gather}
t_0-C_3\log{\log{\left(ct_0\right)}}-\frac{3}{2}\geq T_2\geq \max\left\{T+1,\exp{\left(e^2\right)}\right\}, \label{eq:condt1} \\
T_1-2C_3\log{\log{T_1}}\geq 1, \label{eq:condt11}
\end{gather}
where $T$ and $c$ are as in Theorem~\ref{thm:generalSelberg}. Assume that $\mathcal{L}(z)\neq 0$ for $\Re\{z\}>1/2$ and $\left|\Im\{z\}-t\right|\leq C_3\log{\log{(c(|t|+1))}}+3$. Then~\eqref{eq:logderLGeneral} is true for
\begin{equation}
\label{eq:sigmacond}
\sigma\in\mathscr{S}\left(1.00006C_2+C_4,C_4,c,t\right),
\end{equation}
where $\mathscr{S}$ is defined by~\eqref{eq:RegionForSigma},
\begin{flalign}
a_2 &\de \frac{1.0002m}{C_2C_4}\exp{\left(2C_4\left(1+\frac{\log^{+}{b_2}}{\log{\log{T_1}}}\right)+\left(1+\frac{\log^{+}{b_2}}
{\log{\log{\left(T_1-1\right)}}}\right)\mathcal{R}_2\right)}, \nonumber \\
b_2 &\de b_1\left(\dif{}_{\mathcal{L}},m,\ell,C,C_1,C_3,T_1-1,T_2\right), \quad \mathcal{R}_2=\mathcal{R}\left(C_2,C_3,T_1-1\right), \label{eq:KR}
\end{flalign}
while $b_1$ and $\mathcal{R}$ are defined by~\eqref{eq:Khat} and~\eqref{eq:R}, respectively.
\end{corollary}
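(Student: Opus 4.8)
The plan is to deduce \eqref{eq:logderLGeneral} from \eqref{eq:logLGeneral} by writing $\mathcal{L}'(s)/\mathcal{L}(s) = (\log\mathcal{L})'(s)$ and applying Cauchy's integral formula on a suitably small circle around $s$. Concretely, I would set $r \de C_4/\log\log(c|t|)$ and integrate over the circle $|z - s| = r$, so that
\[
\left|\frac{\mathcal{L}'}{\mathcal{L}}(s)\right| \leq \frac{1}{r}\max_{|z-s|=r}\left|\log{\mathcal{L}(z)}\right| = \frac{\log\log(c|t|)}{C_4}\max_{|z-s|=r}\left|\log{\mathcal{L}(z)}\right|.
\]
The whole argument then reduces to checking that the bound \eqref{eq:logLGeneral} from Theorem~\ref{thm:mainSel} is valid at every point $z$ of this circle, and to translating its right-hand side, which depends on $\Re\{z\}$ and $|z|$, back into the variables $\sigma$ and $|t|$.

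First I would verify that the circle $|z - s| = r$ lies in the region to which Theorem~\ref{thm:mainSel} applies. Since $\log\log(c|t|) \geq \log\log T_1 \geq 2$ one has $r < 1/2$, hence for $z$ on the circle $\left|\Im\{z\} - t\right| \leq r < 1/2$ and $\Re\{z\} \in [\sigma - r, \sigma + r]$. Using $\sigma \in \mathscr{S}\left(1.00006C_2 + C_4, C_4, c, t\right)$ together with $C_4 \leq C_2/2.0001$, and controlling the mismatch between $\log\log(c|z|)$ and $\log\log(c|t|)$ (which is of size $O(1/(|t|\log|t|))$ and is precisely what the constants $1.00006$ and $2.0001$ are there to absorb), I would show $\Re\{z\} \in \mathscr{S}\left(C_2, C_2, c, z\right)$, so that hypothesis~\eqref{eq:sigmacond1} holds at $z$. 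The shift of the thresholds $T_1, T_2$ by $1$, the weakening of condition~\eqref{eq:condt0} to~\eqref{eq:condt1}, and the enlargement of the zero-free strip from $C_3\log\log(c|t|)+2$ to $C_3\log\log\left(c(|t|+1)\right)+3$ are exactly calibrated so that applying Theorem~\ref{thm:mainSel} at the shifted height $\Im\{z\}\in[t-r,t+r]$ is legitimate; this is also why $b_2 = b_1(\dotsc, T_1 - 1, T_2)$ and $\mathcal{R}_2 = \mathcal{R}(C_2, C_3, T_1 - 1)$ appear in~\eqref{eq:KR}.

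Next I would insert the bound $\left|\log{\mathcal{L}(z)}\right| \leq a_1\left(b_2\log(c|z|)\right)^{2(1-\Re\{z\})}\log\log(c|z|)$, where $a_1$ is the constant furnished by Theorem~\ref{thm:mainSel} with $T_1$ replaced by $T_1-1$, and estimate the three $z$-dependent quantities: $\log\log(c|z|)$ and $\log(c|z|)$ by their values at $|t|+1$ up to a harmless factor, and the exponent by $2(1-\Re\{z\}) \leq 2(1-\sigma) + 2r$. Peeling off $\left(b_2\log(c|z|)\right)^{2r} = \exp\!\left(2r\log\!\left(b_2\log(c|z|)\right)\right) \leq \exp\!\left(2C_4\bigl(1 + \log^{+}{b_2}/\log\log T_1\bigr)\right)$ produces exactly the first exponential in $a_2$, while $b_2\log(c|z|) \geq 1$ lets me replace the base by $b_2\log(c|t|)$ at the cost of a constant absorbed into the factor $1.0002$. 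Multiplying by $1/r = \log\log(c|t|)/C_4$ and by the extra $\log\log$ from the Theorem~\ref{thm:mainSel} bound yields the $\left(\log\log(c|t|)\right)^2$ in~\eqref{eq:logderLGeneral}, and collecting the remaining numerical corrections together with the inherited factor $\exp\!\left(\bigl(1 + \log^{+}{b_2}/\log\log(T_1-1)\bigr)\mathcal{R}_2\right)$ gives the stated $a_2$.

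The conceptual content is just Cauchy's formula; the real work, and the main obstacle, is the explicit bookkeeping: choosing $r$ small enough that the circle stays inside the admissible $\sigma$-strip and height range of Theorem~\ref{thm:mainSel} while remaining large enough that the factor $1/r$ does not spoil the final constants, and then tracking how the $O(1/(|t|\log|t|))$ discrepancies between $\log(c|z|)$, $\log\log(c|z|)$ and their values at $|t|$ propagate through the estimate, so that every error is swallowed by the explicitly named constants $1.0002$, $1.00006$ and $2.0001$ rather than surviving as an unspecified term.
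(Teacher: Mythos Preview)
Your proposal is correct and matches the paper's proof essentially step for step: the paper also sets the Cauchy radius equal to $C_4/\log\log(c|t|)$, bounds $\left|\mathcal{L}'/\mathcal{L}(s)\right|$ by $r^{-1}$ times the maximum of $\left|\log\mathcal{L}\right|$ on the circle, verifies via the inequalities $0.99995\le \log\log(c|\Im\{z\}|)/\log\log(c|t|)\le 1.00005$ that each $z$ on the circle satisfies the hypotheses of Theorem~\ref{thm:mainSel} with $T_1$ replaced by $T_1-1$, and then peels off $(b_2\log(c|\Im\{z\}|))^{2r}$ exactly as you describe. The only cosmetic difference is that the paper works with $|\Im\{z\}|$ rather than $|z|$ throughout, which is the literal variable appearing in Theorem~\ref{thm:mainSel}; since $\Re\{z\}\in(1/2,3/2)$ the discrepancy is harmless.
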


\begin{proof}
We can assume that $\mathcal{L}\not\equiv1$ since otherwise the result is trivial. Let $\delta\de C_4/\log{\log{(c|t|)}}$. Then $\delta\in(0,1)$. Observe that
\[
\left\{z\in\C\colon |z-s|\leq\delta\right\} \subset \left\{z\in\C\colon \Re\{z\}>\frac{1}{2}, \left|\Im\{z\}-t\right|<2\right\}.
\]
Because $\log{\mathcal{L}(z)}$ is a holomorphic function on the latter domain, we can write
\[
\frac{\mathcal{L}'}{\mathcal{L}}(s) = \frac{1}{2\pi\ie}\int_{|z-s|=\delta}\frac{\log{\mathcal{L}(z)}}{(z-s)^2}\dif{z},
\]
which implies
\begin{equation}
\label{eq:logderL}
\left|\frac{\mathcal{L}'}{\mathcal{L}}(s)\right| \leq \frac{1}{\delta}\max_{|z-s|=\delta}\left\{\left|\log{\mathcal{L}(z)}\right|\right\}
\leq \frac{1}{C_4}\max_{z\in\mathscr{K}}\left\{\left|\log{\mathcal{L}(z)}\right|\right\}\log{\log{(c|t|)}},
\end{equation}
where $\mathscr{K}\de\left\{z\in\C\colon \left|\Re\{z\}-\sigma\right|\leq\delta, \left|\Im\{z\}-t\right|\leq\delta\right\}$. We are going to use Theorem~\ref{thm:mainSel} for $\sigma=\Re\{z\}$ and $t=\Im\{z\}$ while $z\in\mathscr{K}$ in order to estimate the right-hand side of~\eqref{eq:logderL}.

Take $z\in\mathscr{K}$. Because $|t|-1\leq\left|\Im\{z\}\right|\leq |t|+1$ and $|t|\geq\exp{\left(e^2\right)}$, we have
\begin{equation}
\label{eq:Imz}
0.99995 \leq \frac{\log{\log{\left(|t|-1\right)}}}{\log{\log{|t|}}} \leq \frac{\log{\log{\left(c\left|\Im\{z\}\right|\right)}}}{\log{\log{(c|t|)}}}\leq \frac{\log{\log{\left(|t|+1\right)}}}{\log{\log{|t|}}} \leq 1.00005.
\end{equation}
The latter inequality, together with~\eqref{eq:sigmacond}, implies that
\begin{multline*}
\frac{1}{2}+\frac{C_2}{\log{\log{\left(c\left|\Im\{z\}\right|\right)}}} \leq \frac{1}{2}+\frac{1.00006C_2}{\log{\log{(c|t|)}}} \leq \Re\{z\} \\ 
\leq 1+\frac{2C_4}{\log{\log{(c|t|)}}} \leq 1+\frac{C_2}{\log{\log{\left(c\left|\Im\{z\}\right|\right)}}}.
\end{multline*}
This confirms the validity of~\eqref{eq:sigmacond1}. Replace $T_1$ and $t_0$ in Theorem~\ref{thm:mainSel} with $T_1'$ and $t_0'$, where $T_1'\de T_1-1$, $t_0'\de t_0-1$, and $T_1$ and $t_0$ are as in Corollary~\ref{cor:mainSel}. Because $t_0'\leq \left|\Im\{z\}\right|$, inequalities~\eqref{eq:t0cond}, \eqref{eq:condt1} and~\eqref{eq:condt11} guarantee the conditions of Theorem~\ref{thm:mainSel} on $T_1'$ and $t_0'$ are satisfied. Also,
\begin{multline*}
\bigcup_{z\in\mathscr{K}}\left\{w\in\C\colon \Re\{w\}>\frac{1}{2},\left|\Im\{w\}-\Im\{z\}\right|\leq C_3\log{\log{\left(c\left|\Im\{z\}\right|\right)}}+2\right\} \\
\subset \left\{w\in\C\colon \Re\{w\}>\frac{1}{2},\left|\Im\{w\}-t\right|\leq C_3\log{\log{(c(|t|+1))}}+3\right\}
\end{multline*}
and the latter set is free of zeros of $\mathcal{L}(s)$ by the assumption. Therefore, all conditions of Theorem~\ref{thm:mainSel} are satisfied, thus
\begin{multline}
\label{eq:abslogL}
\left|\log{\mathcal{L}\left(z\right)}\right| \leq \frac{m}{C_2}\exp{\left(\left(1+\frac{\log^{+}{b_2}}{\log{\log{\left(T_1-1\right)}}}\right)\mathcal{R}_2\right)} \times \\
\times \left(b_2\log{\left(c\left|\Im\{z\}\right|\right)}\right)^{2(1-\Re\{z\})}\log{\log{\left(c\left|\Im\{z\}\right|\right)}},
\end{multline}
where $b_2$ and $\mathcal{R}_2$ are defined as~\eqref{eq:KR}. Because
\[
2(1-\Re\{z\})\leq 2(1-\sigma)+\frac{2C_4}{\log{\log{(c|t|)}}} \leq 1-\frac{2.00012C_2}{\log{\log{(c|t|)}}} < 1
\]
with the second expression being non-negative, and $b_2\log{\left(c\left|\Im\{z\}\right|\right)}\geq 1$, we have
\begin{multline*}
\left(b_2\log{\left(c\left|\Im\{z\}\right|\right)}\right)^{2(1-\Re\{z\})} \leq \\ 1.00009\exp{\left(2C_4\left(1+\frac{\log^{+}{b_2}}{\log{\log{T_1}}}\right)\right)}\left(b_2\log{(c|t|)}\right)^{2(1-\sigma)}.
\end{multline*}
Using the latter inequality and~\eqref{eq:Imz} in~\eqref{eq:abslogL}, which is then used in~\eqref{eq:logderL}, gives the final estimate from Corollary~\ref{cor:mainSel}.
\end{proof}

\begin{proof}[Proof of Theorem~\ref{thm:generalSelberg}]
We already proved the first part of the theorem, see Remark~\ref{rem:convexity}. The second part is essentially the content of Theorem~\ref{thm:mainSel} and Corollary~\ref{cor:mainSel}.
\end{proof}

\begin{proof}[Proof of Corollary~\ref{cor:zeta}]
By Example~\ref{rem:backlund} we have $\dif{}_\mathcal{L}=\ell=C=c=1$ and $T=50$, while also $m=1$. Take $t_0=T_1=10^4$, $C_3=10^3$ and $T_2=7778$ in Theorem~\ref{thm:mainSel} and Corollary~\ref{cor:mainSel}. With these parameters conditions~\eqref{eq:condt0},~\eqref{eq:condt00},~\eqref{eq:condt1} and~\eqref{eq:condt11} are satisfied. We are optimizing $C_1$, $C_2$ and $C_4$ in order to get the smallest possible values for $a_1$ and $a_2$, separately in each of the cases (a) and (b). We obtain the following values:
\begin{itemize}
  \item[(a)] $C_1=0.25$, $C_2=0.5$.
  \item[(b)] $C_1=0.34$, $C_2=0.67$, $C_4=0.67/2.0001$.
\end{itemize}
It is easy to verify that all other conditions of Theorem~\ref{thm:mainSel} and Corollary~\ref{cor:mainSel} are satisfied with such choice of parameters, and the values from Corollary~\ref{cor:zeta} follow immediately.
\end{proof}

\begin{proof}[Proof of Corollary~\ref{cor:dirichlet}]
By Example~\ref{rem:dirichlet} we have $\dif{}_\mathcal{L}=\ell=C=1$, $c=q$ and $T=7778$, while also $m=1$. Take $t_0=t_0(q)=10450+10^3\log{\log{q}}$, $T_1=10^4$, $C_3=10^3$ and $T_2=7788$ in Theorem~\ref{thm:mainSel} and Corollary~\ref{cor:mainSel}. It is not hard to see that conditions~\eqref{eq:condt0},~\eqref{eq:condt00},~\eqref{eq:condt1} and~\eqref{eq:condt11} are then satisfied for every $q\geq 2$ since
\[
t_0(q)-10^3\log{\log{\left(t_0(q)q\right)}}-\frac{3}{2}
\]
is an increasing function in $q\geq 2$. Taking similar approach as in the proof of Corollary~\ref{cor:zeta}, we obtain the same values for the parameters $C_1$, $C_2$ and $C_4$. With this, all other conditions of Theorem~\ref{thm:mainSel} and Corollary~\ref{cor:mainSel} are also satisfied, and the values from Corollary~\ref{cor:dirichlet} follow immediately.
\end{proof}

\begin{proof}[Proof of Corollary~\ref{cor:dedekind}]
By Example~\eqref{rem:dedekind} we have $\dif{}_\mathcal{L}=\ell=n_{\K}$, $C=1.9$, $c=5.552\left|\Delta_{n_{\K}}\right|^{1/n_{\K}}$ and $T=7778$, while also $m=n_{\K}$. Note that $n_{\K}\geq 2$ because $\K\neq\Q$, and $c\geq 5.552$. Take $t_0=9650+10^3\log{\log{c}}$, $T_1=10188$, $C_3=10^3$ and $T_2=7794$ in Theorem~\ref{thm:mainSel} and Corollary~\ref{cor:mainSel}. As in the proof of Corollary~\ref{cor:dirichlet}, it is not hard to see that conditions~\eqref{eq:condt0},~\eqref{eq:condt00},~\eqref{eq:condt1} and~\eqref{eq:condt11} are then satisfied. Observe that we can write
\begin{gather}
b_1 = K_1\left(C_1,T_1\right) + \frac{1}{n_{\K}}K_2\left(C_1,T_1\right), \label{eq:KhatSpecial1} \\
b_2 = K_1\left(C_1,T_1-1\right) + \frac{1}{n_{\K}}K_2\left(C_1,T_1-1\right) \label{eq:KhatSpecial2}
\end{gather}
for some positive functions $K_1$ and $K_2$ which can be easily derived from~\eqref{eq:Khat}. Taking $n_{\K}=2$ and performing optimization on $C_1$, $C_2$ and $C_4$ as in the proof of Corollary~\ref{cor:zeta}, we obtain the following values:
\begin{itemize}
  \item[(a)] $C_1=0.25$, $C_2=0.5$.
  \item[(b)] $C_1=0.32$, $C_2=0.64$, $C_4=0.64/2.0001$.
\end{itemize}
With this all other conditions of Theorem~\ref{thm:mainSel} and Corollary~\ref{cor:mainSel} are also satisfied. Bounds for $b_1$ and $b_2$ now follow from~\eqref{eq:KhatSpecial1} and~\eqref{eq:KhatSpecial2}. Values for $a_1$ and $a_2$ are also true for $n_{\K}>2$ since $b_1$ and $b_2$ are decreasing in $n_{\K}$ according to~\eqref{eq:KhatSpecial1} and~\eqref{eq:KhatSpecial2}. Corollary~\ref{cor:dedekind} is thus proved.
\end{proof}

\begin{remark}
\label{rem:right}
It is easy to find (unconditional) bounds for $\left|\log{\mathcal{L}(s)}\right|$ and $\left|\mathcal{L}'(s)/\mathcal{L}(s)\right|$ if $\sigma\geq 1+B/\log{\log{(c|t|)}}$, where $c\geq1$, $B>0$ and $|t|\geq t_0>e$: if $\mathcal{L}\in\mathcal{SP}$, then
\begin{gather*}
\left|\log{\mathcal{L}(s)}\right| \leq m\log{\log{\log{(c|t|)}}} + m\log{\frac{1}{B}} + \frac{m\gamma B}{\log{\log{t_0}}}, \\
\left|\frac{\mathcal{L}'}{\mathcal{L}}(s)\right| \leq m\sum_{p}\sum_{k=1}^{\infty}\frac{\log{p}}{p^{k\sigma}} = -m\frac{\zeta'}{\zeta}\left(\sigma\right) \leq \frac{m}{\sigma-1}\leq \frac{m}{B}\log{\log{(c|t|)}}
\end{gather*}
by~\eqref{eq:logLabs2} and~\cite{Delange}.
\end{remark}

\section{Proof of Theorem~\ref{thm:mertens}}
\label{sec:proofMertens}

Before proceeding to the proof of Theorem~\ref{thm:mertens}, we will provide general bound for the Mertens function which is a consequence of Theorem~\ref{thm:mainSel} for the Riemann zeta-function. We are using the approach outlined in~\cite[Remark 1]{SimonicCS}.

\begin{theorem}
\label{thm:mainMertens}
Assume the Riemann Hypothesis. Let $0<C_1\leq 1$, $0<C_2\leq 2C_1$ and $C_3\geq 1$. Let
\begin{gather*}
\frac{1}{2} + \frac{C_2}{\log{\log{T_1}}} \leq \sigma_0 < 1, \\
T_1\geq \max\left\{\exp{\left(e^{2C_2}\right)},\exp{\left(e^{2}\right)},C_3,\exp{\left(\exp{\left(\frac{1}{2\sigma_0-1}\right)}\right)}\right\}, \\
T_1-C_3\log{\log{T_1}}-\frac{1}{2}\geq T_2\geq \exp{\left(e^2\right)}, \quad T_1-2C_3\log{\log{T_1}}\geq 0.
\end{gather*}
Define
\begin{equation}
\label{eq:epsilon}
\varepsilon_0 \de \frac{1}{C_2}b_{\zeta}^{2\left(1-\sigma_0\right)}\exp{\left(\left(1+\frac{\log^{+}{b_{\zeta}}}{\log{\log{T_1}}}\right)\mathcal{R}\right)}
\frac{\log{\log{T_1}}}{\left(\log{T_1}\right)^{2\sigma_0-1}},
\end{equation}
where $\mathcal{R}=\mathcal{R}\left(C_2,C_3,T_1\right)$ is defined by~\eqref{eq:R} and
\[
b_{\zeta} = b_{\zeta}\left(C_1,C_3,T_1,T_2\right) \de b_1\left(1,1,1,1,C_1,C_3,T_1,T_2\right)
\]
with $b_1$ defined by~\eqref{eq:Khat}. Take $\lambda\in\left(0,T_1\right]$. If $\varepsilon_0<1$, then
\begin{flalign}
\label{eq:Mfinal}
\left|M(x)\right| &\leq 1 + \left(\frac{1}{\pi\sigma_0}\left(1+\frac{\lambda}{T_1}\right)^{\sigma_0}\int_{0}^{T_1}\frac{\dif{u}}{\left|\zeta\left(\sigma_0+\ie u\right)\right|}\right)x^{\sigma_0} \nonumber \\
&+\left(1+\frac{\lambda^{\varepsilon_0}}{\pi}\left(1+\frac{\lambda}{T_1}\right)^{\sigma_0}\left(\frac{1}{\varepsilon_0}
+\frac{2}{\lambda\left(1-\varepsilon_0\right)}\left(1+\frac{\lambda}{T_1}\right)\right)\right)x^{\frac{\sigma_0+\varepsilon_0}{1+\varepsilon_0}}
\end{flalign}
for
\begin{equation}
\label{eq:condxfinal}
x \geq \left(\frac{T_1}{\lambda}\right)^{\frac{1+\varepsilon_0}{1-\sigma_0}}.
\end{equation}
\end{theorem}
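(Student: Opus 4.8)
The plan is to start from Perron's formula expressing $M(x)$ as a contour integral of $1/(s\zeta(s))$, shift the contour to the line $\Re\{s\}=\sigma_0$, and then bound the resulting integral by splitting it at height $T_1$: below $T_1$ we use the (finite) integral $\int_0^{T_1}\mathrm{d}u/|\zeta(\sigma_0+\ie u)|$ which appears literally in~\eqref{eq:Mfinal}, and above $T_1$ we must control $1/|\zeta(\sigma_0+\ie t)|$ using the bound on $\log\zeta$ from Theorem~\ref{thm:mainSel} specialised to $\zeta$. The key point is that Theorem~\ref{thm:mainSel} gives $|\log\zeta(\sigma_0+\ie t)|\leq \varepsilon_0(t)\log(t)$ for a quantity that, for $t\geq T_1$, is dominated by the constant $\varepsilon_0$ defined in~\eqref{eq:epsilon} (monotonicity in $t$ of the relevant pieces, already used in the proof of Theorem~\ref{thm:mainSel}). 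Hence $1/|\zeta(\sigma_0+\ie t)|\leq t^{\varepsilon_0}$ for $t\geq T_1$, which is the engine of the whole estimate.

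First I would set up the truncated Perron formula: for a suitable truncation parameter — here $x^{1+\varepsilon_0}/\lambda$ plays that role, chosen so that the condition~\eqref{eq:condxfinal} is exactly $x^{1+\varepsilon_0}/\lambda\geq T_1$, i.e. the truncation sits above $T_1$ — one writes
\[
M(x)=\frac{1}{2\pi\ie}\int_{2-\ie Y}^{2+\ie Y}\frac{x^s}{s\zeta(s)}\,\mathrm{d}s + (\text{error}),
\]
with $Y=x^{1+\varepsilon_0}/\lambda$, the error term being the standard $O$-term from truncated Perron which accounts for the ``$1+$'' and part of the last bracket in~\eqref{eq:Mfinal}. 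Then I would shift the line of integration from $\Re\{s\}=2$ to $\Re\{s\}=\sigma_0$; under RH there are no poles of $1/\zeta(s)$ in $\sigma_0<\Re\{s\}\leq 2$ (recall $1/2<\sigma_0<1$), and $\zeta$ has no zero and no pole there, so the only contribution beyond the vertical integral on $\Re\{s\}=\sigma_0$ comes from the two horizontal segments at height $\pm Y$, whose contribution I would absorb into the error. The horizontal pieces are controlled because on them $\sigma\in[\sigma_0,2]$, $1/|\zeta|\leq |t|^{\varepsilon_0}$ (for $|t|=Y\geq T_1$), and $1/|s|\leq 1/Y$, giving a bound $\ll x^2 Y^{\varepsilon_0-1}=x^2(x^{1+\varepsilon_0}/\lambda)^{\varepsilon_0-1}$; a short computation shows the exponent of $x$ here is exactly $(\sigma_0+\varepsilon_0)/(1+\varepsilon_0)$ once one uses $\varepsilon_0<1$, matching the last term in~\eqref{eq:Mfinal}.

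On the main line $\Re\{s\}=\sigma_0$ I would split $\int_{-Y}^{Y}$ into $|t|\leq T_1$ and $T_1<|t|\leq Y$. The first part is bounded trivially by $\frac{x^{\sigma_0}}{2\pi}\int_{-T_1}^{T_1}\frac{\mathrm{d}t}{|\sigma_0+\ie t|\,|\zeta(\sigma_0+\ie t)|}\leq \frac{x^{\sigma_0}}{\pi\sigma_0}\int_0^{T_1}\frac{\mathrm{d}u}{|\zeta(\sigma_0+\ie u)|}$ — using $|\sigma_0+\ie t|\geq\sigma_0$ — and this is the second term of~\eqref{eq:Mfinal} up to the factor $(1+\lambda/T_1)^{\sigma_0}$, which I expect enters when one reconciles $Y$ with $T_1$ in a slightly different normalisation of the truncation (this is a bookkeeping point I would check carefully). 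For the tail $T_1<|t|\leq Y$ I would use $1/|\zeta(\sigma_0+\ie t)|\leq |t|^{\varepsilon_0}$ and $1/|\sigma_0+\ie t|\leq 1/|t|$, so the tail is $\ll x^{\sigma_0}\int_{T_1}^{Y}t^{\varepsilon_0-1}\,\mathrm{d}t=\frac{x^{\sigma_0}}{\varepsilon_0}(Y^{\varepsilon_0}-T_1^{\varepsilon_0})\leq\frac{x^{\sigma_0}Y^{\varepsilon_0}}{\varepsilon_0}$; substituting $Y=x^{1+\varepsilon_0}/\lambda$ turns $x^{\sigma_0}Y^{\varepsilon_0}$ into $\lambda^{-\varepsilon_0}x^{\sigma_0+\varepsilon_0(1+\varepsilon_0)}$, and after dividing appropriately (or rather after the optimisation built into the choice of $Y$) one again lands on the exponent $(\sigma_0+\varepsilon_0)/(1+\varepsilon_0)$, with the $\lambda^{\varepsilon_0}$, $1/\varepsilon_0$ and $2/(\lambda(1-\varepsilon_0))$ factors of~\eqref{eq:Mfinal} emerging from combining this tail bound, the Perron error, and the horizontal-segment bound.

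\textbf{Main obstacle.} The genuine analytic input — the pointwise bound $1/|\zeta(\sigma_0+\ie t)|\leq t^{\varepsilon_0}$ for $t\geq T_1$ — is essentially handed to us by Theorem~\ref{thm:mainSel}, so the real work is purely organisational: choosing the truncation height $Y$ as a function of $x,\lambda,\varepsilon_0$ so that (i) $Y\geq T_1$ exactly when $x\geq(T_1/\lambda)^{(1+\varepsilon_0)/(1-\sigma_0)}$, (ii) the three a priori different error contributions (truncated-Perron tail, horizontal segments, vertical tail above $T_1$) all collapse to the same power $x^{(\sigma_0+\varepsilon_0)/(1+\varepsilon_0)}$, and (iii) the constants assemble into precisely the bracketed expression in~\eqref{eq:Mfinal}. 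Verifying that $\varepsilon_0<1$ is used in exactly the spots where it is needed (to make $\varepsilon_0-1<0$ so the horizontal integral converges as stated, and to keep $(\sigma_0+\varepsilon_0)/(1+\varepsilon_0)<1$) and tracking the factor $(1+\lambda/T_1)^{\sigma_0}$ through the normalisation are the fiddly points; none of it is deep, but it must be done with care to get the stated explicit constants rather than merely the shape of the bound.
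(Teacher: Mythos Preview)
Your overall strategy---reduce to $1/|\zeta(\sigma_0+\ie t)|\leq t^{\varepsilon_0}$ for $t\geq T_1$ via Theorem~\ref{thm:mainSel} and feed this into a Perron-type integral---is correct in spirit, and the monotonicity argument producing the constant $\varepsilon_0$ of~\eqref{eq:epsilon} is exactly what the paper does. However, the paper does \emph{not} use the truncated Perron formula with kernel $x^s/s$. It works instead with the smoothed sum $\widehat{M}(x)=\sum_{n\leq x}(x-n)\mu(n)$, uses the absolutely convergent representation
\[
\widehat{M}(x+h)-\widehat{M}(x)=\frac{1}{2\pi\ie}\int_{\sigma_0-\ie\infty}^{\sigma_0+\ie\infty}\frac{(x+h)^{s+1}-x^{s+1}}{s(s+1)\zeta(s)}\,\dif{s},
\]
and recovers $M(x)$ from $\bigl|(\widehat{M}(x+h)-\widehat{M}(x))/h - M(x)\bigr|\leq h+1$ with $h=x^\kappa$. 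The integral is then split at $|t|=T_1$ and at $|t|=\lambda x/h$; there is no Perron truncation error and no horizontal segment. This immediately explains the pieces you flagged as mysterious: the ``$1$'' is the $+1$ in $h+1$; the term $x^{(\sigma_0+\varepsilon_0)/(1+\varepsilon_0)}$ is $h=x^\kappa$ after optimising $\kappa$; the factor $(1+\lambda/T_1)^{\sigma_0}$ comes from $(x+h)^{\sigma_0}\leq(1+\lambda/T_1)^{\sigma_0}x^{\sigma_0}$, since $h/x=x^{\kappa-1}\leq\lambda/T_1$ under~\eqref{eq:condxfinal}; and $2/(\lambda(1-\varepsilon_0))$ arises from the convergent tail $\int_{\lambda x/h}^\infty t^{\varepsilon_0-2}\,\dif{t}$ using $|s(s+1)|\geq t^2$.

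Your direct-Perron sketch also contains two concrete errors. First, the truncation height $Y=x^{1+\varepsilon_0}/\lambda$ does \emph{not} give ``$Y\geq T_1$ iff~\eqref{eq:condxfinal}'': the former reads $x\geq(\lambda T_1)^{1/(1+\varepsilon_0)}$, whereas~\eqref{eq:condxfinal} is $x\geq(T_1/\lambda)^{(1+\varepsilon_0)/(1-\sigma_0)}$; the correct cut in the paper is $\lambda x/h=\lambda x^{(1-\sigma_0)/(1+\varepsilon_0)}$. Second, your horizontal-segment bound does not give what you claim: with $\sigma$ running up to $2$ the factor $x^\sigma$ contributes $x^2$, and $x^2Y^{\varepsilon_0-1}$ with your $Y$ has exponent $1+\varepsilon_0^2>1$, not $(\sigma_0+\varepsilon_0)/(1+\varepsilon_0)<1$. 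Even if you repair these by starting Perron at $c=1+1/\log x$ and choosing $Y$ correctly, reproducing the \emph{exact} constants of~\eqref{eq:Mfinal} via your route would be awkward; the smoothed $1/(s(s+1))$ kernel is precisely what makes the stated explicit form clean.
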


\begin{proof}
Let $x\geq 1$, $\widehat{M}(x)\de\sum_{n\leq x}(x-n)\mu(n)$ and $0<h\leq x$. One can use
\[
\frac{1}{2\pi\ie}\int_{c-\ie\infty}^{c+\ie\infty} \frac{y^{s}\dif{s}}{s(s+1)} = \left\{\begin{array}{ll}
                                                                                         0, & 0<y\leq1, \\
                                                                                         1-y^{-1}, & y\geq 1,
                                                                                       \end{array}
                                                                                \right.
\]
which is valid for every $c>0$, to deduce that
\begin{equation}
\label{eq:Mhat}
\widehat{M}(x+h) - \widehat{M}(x) = \frac{1}{2\pi\ie}\int_{\sigma_0-\ie\infty}^{\sigma_0+\ie\infty} \frac{(x+h)^{s+1}-x^{s+1}}{s(s+1)\zeta(s)}\dif{s}
\end{equation}
is true on RH by following the proof of Perron's formula. Note that
\begin{equation}
\label{eq:Mhat2}
\left|\left(\widehat{M}(x+h) - \widehat{M}(x)\right)h^{-1} - M(x)\right| \leq h+1.
\end{equation}
Let $\kappa\in(0,1)$ and take $h=x^{\kappa}$. Assume that $\lambda xh^{-1}\geq T_1$, which is equivalent to
\begin{equation}
\label{eq:condx}
x \geq \left(\frac{T_1}{\lambda}\right)^{\frac{1}{1-\kappa}}.
\end{equation}
The integral in~\eqref{eq:Mhat} can be written as
\begin{multline*}
\left(\int_{\sigma_0-\ie T_1}^{\sigma_0+\ie T_1}+\left(\int_{\sigma_0-\ie \lambda x/h}^{\sigma_0-\ie T_1}+\int_{\sigma_0+\ie T_1}^{\sigma_0+\ie \lambda x/h}\right) \right. \\
\left.+\left(\int_{\sigma_0-\ie\infty}^{\sigma_0-\ie \lambda x/h}+\int_{\sigma_0+\ie \lambda x/h}^{\sigma_0+\ie\infty}\right)\right)\frac{(x+h)^{s+1}-x^{s+1}}{s(s+1)\zeta(s)}\dif{s}.
\end{multline*}
Denote by $\mathcal{I}_1$, $\mathcal{I}_2$ and $\mathcal{I}_3$ the latter integrals, grouping as indicated and writing in the same order. Then~\eqref{eq:Mhat} and~\eqref{eq:Mhat2} imply
\begin{equation}
\label{eq:M}
\left|M(x)\right| \leq 1 + x^{\kappa} + \frac{1}{2\pi h}\left(\left|\mathcal{I}_1\right|+\left|\mathcal{I}_2\right|+\left|\mathcal{I}_3\right|\right).
\end{equation}
In the estimation of the first two integrals we are using
\[
\left|\frac{(x+h)^{s+1}-x^{s+1}}{s+1}\right| \leq h\left(x+h\right)^{\sigma_0} \leq \left(1+\frac{\lambda}{t_0}\right)^{\sigma_0}hx^{\sigma_0},
\]
while the last integral is bounded with the help of
\[
\left|(x+h)^{s+1}-x^{s+1}\right|\leq 2\left(x+h\right)^{\sigma_0+1} \leq 2\left(1+\frac{\lambda}{t_0}\right)^{\sigma_0+1}x^{\sigma_0+1}.
\]
In derivation of both inequalities we used~\eqref{eq:condx}. By Example~\ref{rem:backlund} and Theorem~\ref{thm:mainSel} for $\mathcal{L}(s)=\zeta(s)$, we obtain
\[
\log{\left|\frac{1}{\zeta\left(\sigma_0+\ie t\right)}\right|} \leq \varepsilon_0\log{t}
\]
for $t\geq T_1$, where $\varepsilon_0$ is defined by~\eqref{eq:epsilon}. Note that all conditions of Theorem~\ref{thm:mainSel} are satisfied by the assumptions of Theorem~\ref{thm:mainMertens}. In derivation of the latter inequality we also used the fact that $\left(\log{u}\right)^{1-2\sigma_0}\log{\log{u}}$ is decreasing function for $u\geq\exp{\left(\exp{\left(1/\left(2\sigma_0-1\right)\right)}\right)}$. Then
\begin{multline}
\label{eq:I123}
\frac{1}{2\pi h}\left(\left|\mathcal{I}_1\right|+\left|\mathcal{I}_2\right|+\left|\mathcal{I}_3\right|\right) \leq \frac{1}{\pi\sigma_0}\left(1+\frac{\lambda}{T_1}\right)^{\sigma_0}x^{\sigma_0}\int_{0}^{T_1}\frac{\dif{u}}{\left|\zeta\left(\sigma_0+\ie u\right)\right|} \\
+ \frac{\lambda^{\varepsilon_0}}{\pi}\left(1+\frac{\lambda}{T_1}\right)^{\sigma_0}\left(\frac{1}{\varepsilon_0}+
\frac{2}{\lambda\left(1-\varepsilon_0\right)}\left(1+\frac{\lambda}{T_1}\right)\right)x^{\sigma_0+\left(1-\kappa\right)\varepsilon_0}.
\end{multline}
Comparison between~\eqref{eq:M} and~\eqref{eq:I123} reveals that the optimal choice for $\kappa$ is when $\kappa=\sigma_0+(1-\kappa)\varepsilon_0$, that is when $\kappa=\left(\sigma_0+\varepsilon_0\right)/\left(1+\varepsilon_0\right)$. Inequality~\eqref{eq:condx} then gives~\eqref{eq:condxfinal}. Taking~\eqref{eq:I123} into~\eqref{eq:M} then implies~\eqref{eq:Mfinal}.
\end{proof}

\begin{proof}[Proof of Theorem~\ref{thm:mertens}]
Firstly we will prove~\eqref{eq:thmMertens}. We are using Theorem~\ref{thm:mainMertens}. Take $\sigma_0=0.98$, $T_1=2.6\cdot10^7$ and $T_2=T_1-10^3\log{\log{T_1}}-1/2$, together with $C_1=C_2=1/2$ and $C_3=10^3$. Then all conditions of Theorem~\ref{thm:mainMertens} are satisfied. Values for $\sigma_0$ and $T_1$ were obtained by searching for the smallest possible $T_1$ such that $\left(\sigma_0+\varepsilon_0\right)/\left(1+\varepsilon_0\right)\leq0.99$.

By computer (see Remark~\ref{rem:computer}) we calculated that
\begin{equation}
\label{eq:integralComp1}
\int_{0}^{11520}\frac{\dif{u}}{\left|\zeta\left(\sigma_0+\ie u\right)\right|} \leq 12951,
\end{equation}
while using Corollary~\ref{cor:zeta} gives
\begin{equation}
\label{eq:integralComp2}
\int_{11520}^{T_1} \frac{\dif{u}}{\left|\zeta\left(\sigma_0+\ie u\right)\right|} \leq \int_{11520}^{T_1} u^{\frac{5.44\log{\log{u}}}{\left(\log{u}\right)^{2\sigma_0-1}}}\dif{u} \leq 5.946\cdot10^{14}.
\end{equation}
Therefore, we can take $5.95\cdot10^{14}$ as an upper bound for the integral in~\eqref{eq:Mfinal}. We choose $\lambda=2$ in order to make the third term in~\eqref{eq:Mfinal} as small as possible. Then~\eqref{eq:thmMertens} is true for $x\geq10^{711}$. The proof is complete since
\[
\left|M(x)\right| \leq x \leq 555.71x^{0.99} + 1.94\cdot10^{14}x^{0.98}
\]
is true for $1\leq x\leq 10^{711}$.

For the proof of~\eqref{eq:thmMertens2} we are using
\[
\left|\sum_{n\leq x}\frac{\mu(n)}{n^s} - \frac{1}{\zeta(s)}\right| \leq \frac{\left|M(x)\right|}{x^\sigma} + |s|\int_{x}^{\infty}\frac{\left|M(u)\right|}{u^{1+\sigma}}\dif{u},
\]
valid for $x\geq 1$ and $\sigma>1/2$ on RH. Estimate~\eqref{eq:thmMertens2} now follows by taking $s=1$ in the latter inequality while bounding the Mertens function with~\eqref{eq:thmMertens}.
\end{proof}

\begin{remark}
\label{rem:computer}
Computation~\eqref{eq:integralComp1} was done on Gadi, an HPC cluster at NCI Australia, using $192$ cores of Intel Xeon Cascade Lake processors. The integral was approximated by Romberg's method on intervals of length $10$ by using SciPy function \texttt{scipy.integrate.romberg}. It is expected that~\eqref{eq:integralComp2} should be of the order $10^7$, thus improving the second term in~\eqref{eq:thmMertens}. However, the author has found such computations very time consuming when pushing them even only to $10^5$.
\end{remark}

\subsection*{Acknowledgements} The author thanks Shehzad Hathi for his patience while teaching him how to do programming with Gadi, as well as Anup Dixit, Richard Brent, Neea Paloj\"{a}rvi and J\"{o}rn Steuding for useful remarks. Finally, the author is grateful to his supervisor Tim Trudgian for continual guidance and support while writing this manuscript.

%%%%%%%%%%%%%%%%%%%%%%%%%%%%%%%%%%%%%%%%%%%%%%%%%%%%%

%\bibliography{ref}
%\bibliographystyle{amsalpha}

\providecommand{\bysame}{\leavevmode\hbox to3em{\hrulefill}\thinspace}
\providecommand{\MR}{\relax\ifhmode\unskip\space\fi MR }
% \MRhref is called by the amsart/book/proc definition of \MR.
\providecommand{\MRhref}[2]{%
  \href{http://www.ams.org/mathscinet-getitem?mr=#1}{#2}
}
\providecommand{\href}[2]{#2}

\end{document}